\definecolor{highlightcolor}{RGB}{255,255,0}
\newtheorem{lemma}{Lemma}[section]
\newtheorem{theorem}{Theorem}[section]
\newtheorem{problem}{Problem}[section]
\newtheorem{remark}{Remark}[section]
\numberwithin{equation}{section}
\begin{document}
	
	\title[Supersonic flow of past a conical body]{Global solutions for supersonic flow of a Chaplygin gas past a conical wing with a shock wave detached from the leading edges}

    \author{Bingsong Long}	

\address[Bingsong Long]{School of Mathematics and Statistics, Huanggang Normal University, Hubei 438000, China}\email{\tt longbingsong@hgnu.edu.cn}	

	\keywords{supersonic flow; the conical wing; Chaplygin gas; Euler equations; degenerate elliptic equation.}

	\subjclass[2020]{35L65, 35L67, 35J25, 35J70, 76N10}
		
	\date{\today}
	
	\begin{abstract}
In this paper, we first investigate the mathematical aspects of supersonic flow of a Chaplygin gas past a conical wing with diamond-shaped cross sections in the case of a shock wave detached from the leading edges. The flow under consideration is governed by the three-dimensional steady compressible Euler equations. For the Chaplygin gas, all characteristics are linearly degenerate, and shocks are reversible and characteristic. Using these properties, we can determine the location of the shock in advance and reformulate our problem as an oblique derivative problem for a nonlinear degenerate elliptic equation in conical coordinates. By establishing a Lipschitz estimate, we show that the equation is uniformly elliptic in any subdomain strictly away from the degenerate boundary, and then further prove the existence of a solution to the problem via the continuity method and vanishing viscosity method.
	\end{abstract}
	
	\allowbreak
	\allowdisplaybreaks	
	\maketitle
	
	
\section{Introduction}\label{sec:1}
The problem of supersonic flow past a conical body is of great importance in mathematical gas dynamics because of its wide applications in aeronautics \cite{Anderson03}. When a conical body is placed in supersonic flow, a shock front is produced around its head. Such a problem has been extensively investigated in physical experiments and numerical simulations \cite{Babenko64,Chushkin70}. However, all efforts to mathematically analyze this problem have mainly focused on some special scenarios. Courant-Friedrichs \cite{Courant-Friedrichs85} proved that when the conical body is a circular cone and the supersonic flow hits this cone without angle of attack (i.e., the flow is parallel to the axis of the cone), a self-similar solution of this problem can be established. Using this self-similar solution as a background solution, Chen \cite{Chen03} showed the global existence of self-similar solutions for the case where the conical body is marginally different from a circular cone. Also, for the case of curved cones, some related problems have been well studied; see, for example, \cite{CF09,CKZ21,ChenXY00,CuiY09,HZ19,LWY14,LienY99,QY20,WangZ09}. When the conical body is a delta wing, which is not a small perturbation of a circular cone, Chen-Yi \cite{CY15} and Long-Yi \cite{LY22} studied the global existence of self-similar solutions under the assumption that the shock front is attached to the leading edges. The study of this scenario is important in both engineering and mathematics, as most hypersonic vehicles are designed as a delta wing \cite{AL85}. As mentioned in \cite[p.910]{Anderson17}, for conventional hypersonic vehicles, the shock front is usually detached from the leading edge, as sketched in Figure \ref{fig1}, which is reproduced from \cite[Figure 14.22b]{Anderson17}. 
Therefore, in this paper, as a continuation of \cite{CY15,LY22}, we further study the problem for the case in which the shock front is only attached to the apex but away from the leading edges in a Chaplygin gas. To the best of our knowledge, there are currently no relevant mathematical theoretical results.

\begin{figure}[htb]
\centering
\includegraphics[scale=0.7]{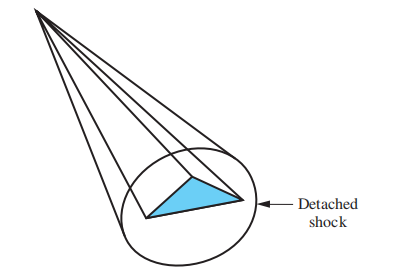}
\caption{The generic hypersonic configurations.}\label{fig1}
\end{figure} 

The state equation for the Chaplygin gas is governed by \cite{Chap04}:
    \begin{equation}\label{eq:Chaplygin gas}
	  p(\rho)=A\Big(\frac{1}{\rho_{*}}-\frac{1}{\rho}\Big),
    \end{equation}
where $p, \rho$ represent, respectively, the pressure and density; $\rho_{*}>0, A>0$ are two constants. As a model of dark energy in the universe, the Chaplygin gas appears in a number of cosmological theories \cite{KMP01,Popov10}; besides, the gas is also employed in the calculation of pressure distribution for subsonic airfoil design by Hsue-shen Tsien \cite{T1939}. Since the following proof heavily relies on the properties of the shock waves for the Chaplygin gas, let us briefly recall some basic properties of the Chaplygin gas. Detailed proofs of these properties can be found in \mbox{\cite[Section 2]{Serre09} and \cite[Appendix A.1]{Serre11}}. The basic facts for the solutions of three-dimensional steady compressible Euler equations for the Chaplygin gas are:

\mbox{$(1)$~}For a piecewise smooth steady flow, irrotationality and isentropy on one side persist on the other; namely, such flows are not only good approximations of Euler flows but genuine flows of the full gas with the Chaplygin equation of state.

\mbox{$(2)$~}All characteristics are linearly degenerate. In fact, it follows from \mbox{\eqref{eq:Chaplygin gas}} that the sound speed of the gas satisfies \mbox{$c=c(\rho)=\sqrt{A}/\rho$}, which implies \mbox{${\partial(\rho c)}/{\partial\rho}\equiv0$}. Moreover, any particle of the fluid moves across the steady shock at sonic speed.

The above property $(2)$ means that shocks are reversible and characteristic. We remark here that such properties also hold in the two-dimensional unsteady case, and the study of multi-dimensional Riemann problems has yielded satisfactory results \cite{CCW22,SL16,Serre09}. 

Now, let us describe the problem in detail. Denote by $\mathcal{C}_{\sigma_1}^{\sigma_2}$ a conical wing with diamond-shaped cross sections, in which the angle between the left and right edges is $2\sigma_1$ and the angle between the upper and lower edges is $2\sigma_2$ with $\sigma_1,\sigma_2\in(0,\pi/2)$. We place $\mathcal{C}_{\sigma_1}^{\sigma_2}$ in a $(x_1,x_2,x_3)$-coordinates such that it is symmetric with respect to the $x_{1}Ox_{3}$-plane and $x_{2}Ox_{3}$-plane, with its apex at the origin (see Figure \ref{fg1}), i.e.,
\begin{equation}\label{eq:conical body}
	\mathcal{C}_{\sigma_1}^{\sigma_2}:=\{(x_1,x_2,x_3):|x_{2}|<x_{3}\tan\sigma_1, |x_{1}|<(x_3-|x_2|\cot\sigma_1)\tan\sigma_2,x_{3}>0\}.
\end{equation}
The oncoming flow of state $(\rho_{\infty},\bm{v}_{\infty})$ is assumed to be uniform and supersonic, past the conical wing $\mathcal{C}_{\sigma_1}^{\sigma_2}$ without angle of attack, i.e., the velocity $\bm{v}_{\infty}=(0,0,v_{3\infty})$. Write
\begin{align*}
	\mathcal{D}&:=\{f(x_{2},x_{3})<x_{1}<0,x_{2}>0,x_{3}>0\},
\end{align*}
where $x_1=f(x_{2},x_{3})$ is the equation for the shock front. Then, due to the symmetry of $\mathcal{C}_{\sigma_1}^{\sigma_2}$, it is sufficient to study our problem in the domain $\mathcal{D}\setminus \mathcal{C}_{\sigma_1}^{\sigma_2}$.

\begin{figure}[H]
	\centering
	\begin{tikzpicture}[smooth, scale=1]
	\draw  [-latex] (8.25,1.45)--(3.75,4.15) node [left] {\footnotesize$x_{2}$};
	\draw  [densely dotted] (6.5,6.1)--(7,5.8);
        \draw  [densely dotted] (10,4)--(10.5,3.7);
	\draw  (6.5,6.1)--(6,2.8)--(10.5,3.7);
	\draw  [densely dotted] (8.5,5.8)--(8.5,6.2);
	\draw  [densely dotted] (8.5,3.6)--(8.5,4);
	\draw  (8.5,6.2)--(6,2.8);
	\draw (6.5,6.1) to [out=40,in=155]  (8.5,6.2) to [out=-25,in=100]  (10.5,3.7);
	\draw [dashed] (6.5,6.1) to [out=-80,in=165]  (8.5,3.6) to [out=-15,in=-180] (9.2,3.44);
        \draw(9.2,3.44) to [out=0,in=-150] (10.5,3.7);
        \draw [densely dotted](7,5.8)--(8.5,5.8)--(10,4);
        \draw [densely dotted] (7,5.8)--(8.5,4)--(10,4);
        \draw  [densely dotted]  (7,5.8)--(6,2.8)--(8.5,5.8);  
        \draw  [densely dotted]  (8.5,4)--(6,2.8)--(10,4);
        \draw  [densely dotted] (8.5,5.8)--(8.5,6.2);
	\draw  [densely dotted] (8.5,3.6)--(8.5,4);
        \draw (6.15,3.2)arc (90:138:0.65);
	\node at (5.55,3.45){\tiny{$\frac{\pi}{2}-\sigma_1$}};
	\draw (7.15,3.35)arc (15:40:0.72);
	\node at (7.28,3.6){\tiny$\sigma_2$};
	\draw  [-latex] (6,2.8)--(6,2) node [below] {\footnotesize$x_{1}$};
	\node at (6.2,2.4){\footnotesize$O$};
	\draw (6,2.8)--(5.5,2.4);
        \draw  [dashed]  (5.8,2.62)--(8.73,5.1);
	\draw  [densely dotted]  (8.73,5.1)--(9.4,5.656);
	\draw  [-latex]  (9.4,5.656)--(10.07,6.212)node [right] {\footnotesize$x_{3}$};
	\draw [thick, -stealth] (5,2) node [left] {\footnotesize 
        $\bm{v}_{\infty}$}  --(5.8,2.62) ;
	\draw [-latex](9.73,4.3)--(10.8,4.6) node[right] {\footnotesize{conical wing}};
	\draw [-latex](8.3,6.3)--(8.8,6.8) node[above right] {\footnotesize{shock}};
	\end{tikzpicture}
	\caption{Supersonic flow past a conical wing with diamond-shaped cross sections.}
	\label{fg1}
\end{figure}

From the property $(1)$ of the Chaplygin gas as above, we can introduce a potential function \mbox{$\Phi$} defined by \mbox{$\nabla_{\bm{x}}\Phi:=\bm{v}=(v_1,v_2,v_3)$, where $\bm{x}:=(x_1,x_2,x_3)$}. The flow is governed by the conservation of mass and the Bernoulli law
\begin{align}
	\mathrm{div}_{\bm{x}}(\rho \nabla_{\bm{x}}\Phi)&=0,\label{eq: conservation of mass}\\
	\frac{1}{2}|\nabla_{\bm{x}}\Phi|^{2}+h(\rho)&=\frac12B_{\infty},\label{eq: Bernoulli law}
\end{align}
where~$\rho>0$~is unknown and represent the density; $B_{\infty}/2=(v_{3\infty}^2-c_{\infty}^2)/2$ is the Bernoulli constant. Here, from \eqref{eq:Chaplygin gas}, the specific enthalpy $h(\rho)$ satisfies
\begin{equation}\label{eq:for h}
	h(\rho)=-\frac{c^2}{2}=-\frac{A}{2\rho^2}.
\end{equation}
In what follows, we choose $B_{\infty}=1$ without loss of generality. Then, by \eqref{eq: Bernoulli law}--\eqref{eq:for h}, the density $\rho$ can be expressed as a function of $\Phi$, that is,
\begin{equation}\label{eq:2.29}
	\rho=\frac{\sqrt{A}}{\sqrt{|\nabla_{\bm{x}}\Phi|^{2}-1}}.
\end{equation}

Denote by $S$ the shock front. The function $\Phi$ satisfies the following Rankine--Hugoniot conditions
	\begin{align}
		[\rho \nabla_{\bm{x}}\Phi]\cdot\bm{n}_{s}=0\quad &\text{on $S$},\label{eq: R-H condition}\\	
            [\Phi]=0\quad &\text{on $S$},\label{eq: contious of phi}
	\end{align}
where $[\cdot]$ stands for the jump of quantities across $S$, and $\bm{n}_{s}$ is the exterior normal to $S$. Note that the condition \mbox{\eqref{eq: R-H condition}} is naturally satisfied and thus compatible with the above property \mbox{$(2)$} of the Chaplygin gas.

Therefore, our problem can be formulated mathematically as the following boundary value problem.

\begin{problem}\label{prob1}
	For the conical wing $\mathcal{C}_{\sigma_1}^{\sigma_2}$ and oncoming flow given above, in the case that the shock front $S$ is only attached to the apex but away from the leading edges of $\mathcal{C}_{\sigma_1}^{\sigma_2}$, we aim to seek a solution $\Phi$ of system \eqref{eq: conservation of mass}--\eqref{eq: Bernoulli law} in the domain $\mathcal{D}\setminus \mathcal{C}_{\sigma_1}^{\sigma_2}$ with the Dirichlet boundary condition \eqref{eq: contious of phi} and the following slip boundary conditions
	\begin{align}
		\nabla_{\bm{x}}\Phi\cdot\bm{n}_{w}=0 \quad&\text{on $\{x_{1}=(x_2\cot\sigma_1-x_3)\tan\sigma_2,x_{2}>0,x_{3}>0\}$},\label{eq3:1.2}\\	
        \nabla_{\bm{x}}\Phi\cdot\bm{n}_{sy1}=0 \quad&\text{on $\{x_1=0\}\cap\overline{\mathcal{D}\setminus \mathcal{C}_{\sigma_1}^{\sigma_2}}$},\label{eq2:1.2}\\
		\nabla_{\bm{x}}\Phi\cdot\bm{n}_{sy2}=0\quad &\text{on $\{x_2=0\}\cap\overline{\mathcal{D}\setminus \mathcal{C}_{\sigma_1}^{\sigma_2}}$},\label{eq1:1.2}
	\end{align}
    where $\bm{n}_{w}=(1,-\cot\sigma_1\tan\sigma_2,\tan\sigma_2)$ is the exterior normal to $\{x_{1}=(x_2\cot\sigma_1-x_3) \tan\sigma_2,\\x_{2}>0, x_3>0\}$, and $\bm{n}_{sy1}=(1,0,0)$ is the exterior normal to $\{x_1=0\}$, and $\bm{n}_{sy2}=(0,-1,0)$ is the exterior normal to $\{x_2=0\}$.
\end{problem}

Our main result of this paper is the following theorem.

\begin{theorem}[Main Theorem]\label{thm: Main}
	Suppose that the conical wing $\mathcal{C}_{\sigma_1}^{\sigma_2}$ is defined by \eqref{eq:conical body}, and the state $(\rho_{\infty},\bm{v}_{\infty}=(0,0,v_{3\infty}))$ of the oncoming flow is uniform and supersonic. Then there exists a critical angle $\sigma_{\infty}=\sigma_{\infty}(\rho_{\infty},\bm{v}_{\infty})\in(0,{\pi}/{2})$ such that for any $\sigma_{1},\sigma_{2}\in(0,\sigma_{\infty})$, there exists a piecewise smooth solution to Problem $\ref{prob1}$.
\end{theorem}

Due to the geometric characteristics of $\mathcal{C}_{\sigma_1}^{\sigma_2}$, we can reduce Problem \ref{prob1} in conical coordinates $(\xi_1,\xi_2):=(x_1/x_3,x_2/x_3)$, as detailed in subsection \ref{sec:2.2}. This reformulation ultimately leads to an oblique derivative problem for a nonlinear degenerate elliptic equation (Problem \ref{prob2}). Since the conical wing $\mathcal{C}_{\sigma_1}^{\sigma_2}$ is not a small perturbation of a circular cone, the uniform ellipticity of this equation cannot be determined in advance. To overcome this difficulty, inspired by \cite{LY22,Serre09}, we will establish a crucial Lipschitz estimate. Note that the works in \cite{LY22,Serre09} mainly focus on the Dirichlet problem. However, the corresponding boundary condition of \eqref{eq3:1.2} is oblique in the $(\xi_1,\xi_2)$-coordinates. Thus, the techniques developed in these works for deriving the Lipschitz estimate are not directly applicable to our problem. 

We remark here that a closely related problem of shock reflection has been studied in \cite{CFe10,CFe18}, which is a two-dimensional unsteady case and can also be reformulated as a boundary value problem for a nonlinear degenerate elliptic equation.

The rest of the paper is structured into two sections. In Section \ref{sec:2}, we analyze the global structure of the shock wave in the $(\xi_1,\xi_2)$-coordinates, and then reduce Problem \ref{prob1} to an oblique derivative problem for a nonlinear degenerate elliptic equation, i.e., Problem \ref{prob2}. In Section \ref{sec:3}, we show the existence for Problem \ref{prob2} by the continuity method and vanishing viscosity method. To be specific, by introducing a new potential function, we first establish a Lipschitz estimate for a viscosity solution, which can guarantee the local uniform ellipticity of equation \eqref{eq: 2-D eq.} below. Then, we show the existence of the viscosity solution by the method of continuity, and further prove that the limit of the viscosity solution is the solution to Problem \ref{prob2}. In Appendix \ref{sec:appendix a}, we prove that for the Chaplygin gas, equation \eqref{eq: 2-D eq.} is elliptic in the interior of a parabolic-elliptic region.	

    \section{Preliminaries}\label{sec:2}
\subsection{The location of the shock}\label{sec:2.1}
Substituting \eqref{eq:2.29} into \eqref{eq: conservation of mass} yields a quasilinear equation for $\Phi$
\begin{multline}\label{eq:3-D for Phi}	
(c^{2}-\Phi^{2}_{x_{1}})\Phi_{x_{1}x_{1}}+(c^{2}-\Phi^{2}_{x_{2}})\Phi_{x_{2}x_{2}}+(c^{2}-\Phi^{2}_{x_{3}})\Phi_{x_{3}x_{3}}\\-2\Phi_{x_{1}}\Phi_{x_{2}}\Phi_{x_{1}x_{2}}-2\Phi_{x_{1}}\Phi_{x_{3}}\Phi_{x_{1}x_{3}}-2\Phi_{x_{2}}\Phi_{x_{3}}\Phi_{x_{2}x_{3}}=0,
\end{multline}
where
\begin{equation}\label{eq:for c of Phi}
	c^2=|\nabla_{\bm{x}}\Phi|^{2}-1.
\end{equation}
The characteristic equation of $\eqref{eq:3-D for Phi}$ takes the form (see \cite[Lemma 1.1]{CY15})
\begin{equation}\label{eq:2.31}
	Q(\bm{\kappa})=c^{2}-|\nabla_{\bm{x}}\Phi\cdot\bm{\kappa}|^{2} \quad\text{for any}~\bm{\kappa}\in\mathbb{R}^{3}~\text{with}~|\bm{\kappa}|=1,
\end{equation}
then equation \eqref{eq:3-D for Phi} is hyperbolic in a supersonic region. Under the assumption that the oncoming flow is supersonic, the downstream flow remains supersonic owing to the above property $(2)$ of the Chaplygin gas, and then equation \mbox{\eqref{eq:3-D for Phi}} is hyperbolic in the domain $\mathcal{D}\setminus \mathcal{C}_{\sigma_1}^{\sigma_2}$. This fact means that there exists a Mach cone $\mathcal{C}_{\infty}$, determined by the oncoming flow, emanating from the apex of $ \mathcal{C}_{\sigma_1}^{\sigma_2}$. It follows from the above property $(2)$ of the Chaplygin gas that this Mach cone $\mathcal{C}_{\infty}$ coincides with the shock front $S$.

We then calculate the equation for $\mathcal{C}_{\infty}$. From \cite[Appendix B]{LY22}, the expression of Mach cone for equation \eqref{eq:3-D for Phi} is 
\begin{equation}\label{eq:Mach cone}
	|\nabla_{\bm{x}}\Phi|^{2}-\vert \nabla_{\bm{x}}\Phi\cdot\frac{\bm{x}}{|\bm{x}|}\vert^{2}=c^{2}\quad\text{for}~\bm{x}\in\mathbb{R}^{3}\setminus\{0\}.
\end{equation}
Inserting \eqref{eq:for c of Phi} and $\nabla_{\bm{x}}\Phi=\bm{v}_{\infty}=(0,0,v_{3\infty})$ into \eqref{eq:Mach cone}, we obtain
\begin{equation}\label{eq:Mach cone C}
	\mathcal{C}_{\infty}:\quad v_{3\infty}^2x^2_3=|\bm{x}|^2.
\end{equation}
To avoid the shock front attaching to the edges of $\mathcal{C}_{\sigma_1}^{\sigma_2}$, the angles $\sigma_1$ and $\sigma_2$ must be less than half of the vertex angle of $\mathcal{C}_{\infty}$, namely,
\begin{equation}\label{eq: for sigma limit}
	\sigma_\infty:=\arcsin\Big(\dfrac{\sqrt{v^2_{3\infty}-1}}{v_{3\infty}}\Big)>\max\{\sigma_1,\sigma_2\}.
\end{equation}

Note that the problem \eqref{eq: conservation of mass}--\eqref{eq: Bernoulli law} and \eqref{eq: contious of phi}--\eqref{eq1:1.2} is invariant under the following scaling
\begin{equation*}
	\bm{x}\longmapsto \varsigma\bm{x}, \quad 	(\rho,\Phi)\longmapsto\Big(\rho,\dfrac{\Phi}{\varsigma}\Big)\quad\quad \text{for} \quad\varsigma\neq 0.
\end{equation*}
Therefore, we can seek a solution with the form:
\begin{equation}\label{eq: conical tran.} 
	\rho(\bm{x})=\rho(\bm{\xi}), \quad \Phi(\bm{x})=x_3\phi(\bm{\xi}),
\end{equation}
where $\bm{\xi}:=(\xi_{1},\xi_{2})=({x_1}/{x_3},{x_2}/{x_3})$ is the conical coordinates. For notational simplicity, in the $(\xi_{1},\xi_{2})$-coordinates, we still use $\mathcal{C}_{\infty}$ for the corresponding curves of the Mach cone. From \eqref{eq:Mach cone C} and \eqref{eq: conical tran.}, it follows that
\begin{equation}\label{eq1:cone infity}
    \mathcal{C}_{\infty}:\quad v_{3\infty}^2=1+|\bm{\xi}|^2.
\end{equation}
Let $P_1$ and $P_2$ be the intersection points of $\mathcal{C}_{\infty}$ with the $\xi_2$-axis and the $\xi_1$-axis, respectively. Also, let $P_3$ and $P_4$ denote the intersection points of the conical wing $\mathcal{C}_{\sigma_1}^{\sigma_2}$ with the $\xi_1$-axis and the $\xi_2$-axis, respectively.

Therefore, we can draw the pattern of the shock as in Figure \ref{fg P}.

\begin{figure}[H]
	\centering
	\begin{tikzpicture}[smooth, scale=1]
    \draw  [-latex](0,2)--(6,2) node [right] {\footnotesize$\xi_{1}$};
	\draw  [-latex](4,0.7)--(4,6.5) node [above] {\footnotesize$\xi_{2}$};
	\draw  (1,2)   to [out=90, in =180] (4,5) node [above left] {\footnotesize$P_{1}$} ;
	\node  at (1,2) [below] {\footnotesize$P_{2}$};
    \node  at (2.5,2) [below] {\footnotesize$P_{3}$};
    \node  at (1.75,2) [below] {\footnotesize$\Gamma_{sy2}$};
    \node  at (3.95,4.25) [right] {\footnotesize$P_{4}$};
    \node  at (4,4.75) [right] {\footnotesize$\Gamma_{sy1}$};
	\node  at (2.3,3.3) {$\Omega$};
	\node  at (3,3.6) {\footnotesize$\Gamma_{py}$};
	\node  at (1.9,4.8) {\footnotesize$\Gamma_{cone}^{\infty}$};
	\draw [draw=gray, fill=gray, fill opacity=0.6](2.5,2)--(4,4.5)--(4,2)--(2.5,2);
	\node at(4,2) [below right] {\footnotesize$O$};
	\fill(4,5)circle(1.3pt);
	\fill(1,2)circle(1.3pt);
	\fill(4,4.5)circle(1.3pt);
	\fill(2.5,2)circle(1.3pt);
	\end{tikzpicture}
	\caption{Domain in the conical coordinates.}
	\label{fg P}
\end{figure}

\subsection{BVP for a nonlinear degenerate elliptic equation}\label{sec:2.2}
In this subsection, we will restate Problem \ref{prob1} in the $(\xi_1,\xi_2)$-coordinates. Before proceeding further, let us introduce some notations. Denote by $\Gamma_{cone}^{\infty}$ the arc $P_{1}P_{2}$; by $\Gamma_{sy1}$, $\Gamma_{sy2}$ and $\Gamma_{py}$, respectively, the lines $P_{1}P_{4}$, $P_{2}P_{3}$ and $P_{3}P_{4}$. Let $\Omega$ be the domain $P_{1}P_{2}P_{3}P_{4}$ (see Figure \ref{fg P}). Also, let
\begin{equation*}
	D^{2}f[\bm{a},\bm{b}]:= \sum^{2}_{i,j=1} a_{i}b_{j}\partial_{ij}f\quad\text{for any $f\in C^2$ and $\bm{a},\bm{b}\in\mathbb{R}^{2}$}.
\end{equation*}

By \eqref{eq:3-D for Phi} and \eqref{eq: conical tran.}, the potential function $\phi$ satisfies
\begin{equation}\label{eq: 2-D eq.}
	c^2(\Delta\phi+D^{2}\phi[\bm{\xi},\bm{\xi}])-D^{2}\phi[D\phi-\chi\bm{\xi},D\phi-\chi\bm{\xi}]=0,
\end{equation}
where $D$ and $\Delta$ stand for the gradient and Laplacian operator with respect to $\bm{\xi}$, respectively, and
\begin{equation}\label{eq: velocity3}
	\chi=\phi-D\phi\cdot\bm{\xi}.
\end{equation}
Define a pseudo-Mach number
\begin{equation}\label{eq: 2D Mach num.}
	L^{2}:=\frac{|D\phi|^{2}+|\chi|^{2}-\frac{\phi^{2}}{1+|\bm{\xi}|^{2}}}{c^2}=1+\frac{1-\frac{\phi^{2}}{1+|\bm{\xi}|^{2}}}{c^2},
\end{equation}
where
\begin{equation}\label{eq:2.34}
	c^2=|D\phi|^{2}+|\chi|^{2}-1.
\end{equation}
Then, the type of equation \eqref{eq: 2-D eq.} is determined by $L$. More specifically, equation \eqref{eq: 2-D eq.} is elliptic when $0\leq L<1$, hyperbolic when $L>1$ and parabolic degenerate when $L=1$. 

From \eqref{eq1:cone infity} and \eqref{eq: 2D Mach num.}, together with $\phi_\infty=v_{3\infty}$, we see that equation \eqref{eq: 2-D eq.} is hyperbolic outside the domain $\Omega$ and parabolic degenerate on the boundary $\Gamma_{cone}^{\infty}$. Additionally, the analysis in Appendix \ref{sec:appendix a} ensures that equation \eqref{eq: 2-D eq.} is elliptic in the interior of a parabolic-elliptic region. It is therefore natural to require that equation \eqref{eq: 2-D eq.} is elliptic in the domain $\overline{\Omega}\setminus\overline{\Gamma_{cone}^{\infty}}$. However, the uniform ellipticity of equation \eqref{eq: 2-D eq.} in this domain cannot be obtained in advance.

Since equation \eqref{eq: 2-D eq.} is parabolic degenerate on $\Gamma_{cone}^{\infty}$ (i.e., $L=1$), the boundary condition on $\Gamma_{cone}^{\infty}$ can be simplified as
\begin{equation}\label{eq:3.1}
	\phi=\sqrt{1+|\bm{\xi}|^{2}}.
\end{equation}
Now, Problem \ref{prob1} can be reformulated as

\begin{problem}\label{prob2}
    For the conical wing $\mathcal{C}_{\sigma_1}^{\sigma_2}$ and oncoming flow given above, when $\sigma_{1},\sigma_{2}\in(0,\sigma_{\infty})$, we expect to find a solution $\phi$ satisfying the following oblique derivative problem:
	\begin{equation}\label{eq: problem for phi}
		\begin{cases}
			\text{Equation}~\eqref{eq: 2-D eq.}\quad&\text{in $\Omega$},\\
			\phi=\sqrt{1+|\bm{\xi}|^{2}}\quad&\text{on $\Gamma_{cone}^{\infty}$},\\
			D\phi\cdot\bm{\nu}_{py}+\chi\tan\sigma_2=0\quad&\text{on $\Gamma_{py}$},\\
			D\phi\cdot\bm{\nu}_{sy1}=0\quad&\text{on $\Gamma_{sy1}$},\\
                D\phi\cdot\bm{\nu}_{sy2}=0\quad&\text{on $\Gamma_{sy2}$},
		\end{cases}
	\end{equation}
    where $\bm{\nu}_{py}=(1,-\cot\sigma_1\tan\sigma_2)$, $\bm{\nu}_{sy1}=(1,0)$ and $\bm{\nu}_{sy2}=(0,-1)$ are the exterior normals to $\Gamma_{py}$, $\Gamma_{sy1}$ and $\Gamma_{sy2}$, respectively; the angle $\sigma_{\infty}$ is given by \eqref{eq: for sigma limit}; $\chi$ is defined by \eqref{eq: velocity3}.
\end{problem}

To prove Theorem \ref{thm: Main}, it suffices to show that

\begin{theorem}\label{thm4}
	Let $\sigma_{\infty}$ be as in Problem $\ref{prob2}$. Then, for any $\sigma_{1},\sigma_{2}\in(0,\sigma_{\infty})$, there exists a solution $\phi$ to Problem $\ref{prob2}$ satisfying
	\begin{equation*}
		\phi\in Lip(\overline{\Omega})\cap C^{1,\alpha}(\overline{\Omega}\setminus\overline{\Gamma_{cone}^{\infty}})\cap C^\infty\big(\overline{\Omega}\setminus(\overline{\Gamma_{cone}^{\infty}}\cup\{P_3\}\cup\{P_4\})\big)
	\end{equation*}
	and
	\begin{equation*}
		\phi >\sqrt{1+|\bm{\xi}|^{2}} \quad\text{in}~ \overline{\Omega}\setminus\overline{\Gamma_{cone}^{\infty}},
	\end{equation*}
    where $\alpha=\alpha(\sigma_1,\sigma_2)\in (0,1)$ is a constant.
\end{theorem}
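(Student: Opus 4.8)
The plan is to prove \Cref{thm4} by the method of continuity, the core being a family of a priori estimates, uniform in the continuation parameter, whose centerpiece is the Lipschitz bound together with the strict inequality $\phi>\sqrt{1+|\bm{\xi}|^{2}}$; by the criterion of \Cref{sec:appendix a} these are precisely what forces equation \eqref{eq: 2-D eq.} to be uniformly elliptic on any subdomain bounded away from $\Gamma_{cone}^{\infty}$. As a preliminary I would freeze the geometry: since $\Omega=\Omega(\sigma_{1},\sigma_{2})$ itself depends on the cone angles, introduce for $t\in[0,1]$ a smooth family of diffeomorphisms carrying \Cref{prob2} with angles $(t\sigma_{1},t\sigma_{2})$ onto a problem on the fixed domain $\Omega:=\Omega(\sigma_{1},\sigma_{2})$ with $t$-dependent coefficients and boundary operators; the type condition \eqref{eq: for sigma limit}, which involves only the oncoming flow, holds for every $t$ because $t\sigma_{i}\le\sigma_{i}<\sigma_{\infty}$, so $P_{3},P_{4}$ stay strictly interior to the quarter disk and the arc $\Gamma_{cone}^{\infty}$ is unchanged. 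In addition, following \Cref{sec:3}, I would rewrite \Cref{prob2} in spherical coordinates for $\bm{x}$ (polar coordinates for $\bm{\xi}$) so as to produce a potential function $W$ attached to $\phi$ which solves a second-order equation amenable to the maximum principle and Hopf's lemma even on the degenerate arc; this object is what replaces the purely Dirichlet devices of \cite{LY22,Serre09,Serre11}.

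For a solution $\phi$ of the $t$-problem I would then establish, with constants independent of $t$: (i) the two-sided bound $\sqrt{1+|\bm{\xi}|^{2}}\le\phi\le M_{0}$, the lower part being exactly the ellipticity condition $L\le1$ and obtained by comparison along the boundary (where it holds with equality on $\Gamma_{cone}^{\infty}$ and where the oblique condition on $\Gamma_{py}$ and the Neumann conditions on $\Gamma_{sy1},\Gamma_{sy2}$ are checked to be compatible with comparison against $\sqrt{1+|\bm{\xi}|^{2}}$) together with the maximum principle; (ii) the crucial Lipschitz estimate $\|\phi\|_{Lip(\bar{\Omega})}\le M_{1}$, obtained by applying the maximum principle and Hopf's lemma to $W$ and to tangential difference quotients, with explicit barriers at the corners $P_{3},P_{4}$ and a square-root type degenerate barrier near $\Gamma_{cone}^{\infty}$; (iii) the strict inequality $\phi>\sqrt{1+|\bm{\xi}|^{2}}$ in $\bar{\Omega}\setminus\overline{\Gamma_{cone}^{\infty}}$, from the strong maximum principle applied to $\phi-\sqrt{1+|\bm{\xi}|^{2}}$ (ruling out interior contact) and Hopf's lemma at the non-degenerate boundary pieces. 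Combining (i)--(iii) with \Cref{sec:appendix a} — and with \Cref{DeYu-1}, which excludes a parabolic bubble so that ellipticity holds on all of $\bar{\Omega}\setminus\overline{\Gamma_{cone}^{\infty}}$ — for each $\varepsilon>0$ equation \eqref{eq: 2-D eq.} is uniformly elliptic on $\Omega_{\varepsilon}:=\{\bm{\xi}\in\Omega:\mathrm{dist}(\bm{\xi},\Gamma_{cone}^{\infty})>\varepsilon\}$ with ellipticity ratio depending only on $\varepsilon$ and the data. Interior and boundary Schauder theory then yields $C^{1,\alpha}$ regularity up to $\partial\Omega\setminus\overline{\Gamma_{cone}^{\infty}}$, and a corner analysis at $P_{3},P_{4}$, where two non-characteristic oblique or Neumann conditions meet at a fixed angle, fixes $\alpha=\alpha(\sigma_{1},\sigma_{2})\in(0,1)$ and upgrades this to $C^{\infty}$ regularity away from $\overline{\Gamma_{cone}^{\infty}}\cup\{P_{3}\}\cup\{P_{4}\}$.

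With the uniform estimates in hand the continuation is routine. Let $\mathcal{I}=\{t\in[0,1]:\text{the $t$-problem has a solution in the space of \Cref{thm4}}\}$. At $t=0$ the cone shrinks to a ray, $\Omega_{0}$ is the quarter disk $\{|\bm{\xi}|<\sqrt{v_{3\infty}^{2}-1},\ \xi_{1}<0,\ \xi_{2}>0\}$, and the uniform flow $\phi\equiv v_{3\infty}$ (which on $\Gamma_{cone}^{\infty}$ equals $\sqrt{1+|\bm{\xi}|^{2}}$ and has vanishing normal derivative on the axes) solves it, so $0\in\mathcal{I}\neq\emptyset$. Openness: at a solution $\phi_{t}$ the linearization of \eqref{eq: 2-D eq.} together with the linearized boundary operators is a linear oblique-derivative problem, uniformly elliptic in the interior and degenerate only on $\Gamma_{cone}^{\infty}$; invoking the solvability of this degenerate oblique problem in a suitable weighted Hölder space and the implicit function theorem shows $\mathcal{I}$ is open. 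Closedness: if $t_{n}\to t_{*}$ with solutions $\phi_{t_{n}}$, the $t$-uniform estimates give precompactness in $C^{1}$ on compact subsets of $\bar{\Omega}\setminus\overline{\Gamma_{cone}^{\infty}}$ and boundedness in $Lip(\bar{\Omega})$, and any limit solves the $t_{*}$-problem with the same estimates, so $t_{*}\in\mathcal{I}$; hence $\mathcal{I}=[0,1]$ and existence holds at $t=1$. For uniqueness, the difference $w=\phi^{(1)}-\phi^{(2)}$ of two solutions solves a linear homogeneous problem, uniformly elliptic in each $\Omega_{\varepsilon}$ and degenerate on $\Gamma_{cone}^{\infty}$, with $w=0$ on $\Gamma_{cone}^{\infty}$ and homogeneous oblique/Neumann conditions elsewhere; a comparison argument based on the maximum principle and Hopf's lemma at $\Gamma_{py},\Gamma_{sy1},\Gamma_{sy2}$ (after controlling the sign of the zeroth-order term in the linearized condition on $\Gamma_{py}$ and treating the corners $P_{3},P_{4}$) forces $w\equiv0$.

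The decisive difficulty is steps (ii)--(iii): securing the Lipschitz bound and the strict separation $\phi>\sqrt{1+|\bm{\xi}|^{2}}$ \emph{up to the degenerate boundary} while the condition on $\Gamma_{py}$ is oblique rather than Dirichlet, since the comparison and barrier constructions of \cite{LY22,Serre09,Serre11} rely essentially on Dirichlet data; the spherical-coordinate potential $W$ is introduced precisely to recover a maximum-principle structure compatible with that oblique condition. The secondary technical hurdles are the solvability theory for the linearized degenerate oblique-derivative problem needed for openness, and the sharp regularity and compatibility analysis at the corners $P_{3}$ and $P_{4}$.
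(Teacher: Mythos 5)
Your route is genuinely different from the paper's (a geometric continuation in the cone angles, run directly on the degenerate problem), but as written it has a real gap at its central step, the openness of $\mathcal{I}$. Because you never regularize the degenerate boundary, at every $t$ your implicit-function-theorem argument requires inverting the linearization of \eqref{eq: 2-D eq.} with oblique/Neumann conditions on a domain whose arc $\Gamma_{cone}^{\infty}$ is exactly where the operator is parabolic degenerate (the Dirichlet data force $\phi=\sqrt{1+|\bm{\xi}|^{2}}$, i.e. $L=1$, there). You invoke ``solvability of this degenerate oblique problem in a suitable weighted H\"older space,'' but no such theory is established or cited, and this is precisely the obstruction the paper is structured to avoid: it never solves a degenerate linear problem. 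Instead it (a) replaces the data on $\Gamma_{cone}^{\infty}$ by $\sqrt{1+|\bm{\xi}|^{2}}+\varepsilon$ (vanishing viscosity), so that for fixed $\varepsilon>0$ the a priori bounds make the problem uniformly elliptic up to the arc and Lieberman's oblique-derivative theory applies; (b) runs the continuity method in the coefficient parameter $\mu$ of \eqref{eq: for Fmu} (linear at $\mu=0$), after the substitution $\phi=\sqrt{1+|\bm{\xi}|^{2}}\cosh z$, which makes the principal coefficients independent of $z$ and the zeroth-order coefficient of the linearization nonpositive so the linearized problem is uniquely solvable; and only then (c) sends $\varepsilon\to0^{+}$ using estimates uniform in $(\mu,\varepsilon)$. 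A further structural problem with your scheme: the diffeomorphisms onto the fixed domain necessarily degenerate at $t=0$, where $\Gamma_{py}$ and the corners $P_{3},P_{4}$ collapse, so your starting solution does not live on the fixed domain with uniformly controlled coefficients, and the corner angles (hence the corner exponent $\alpha$ and the constants) vary with $t$, so uniformity in $t$ is not obvious.

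The second gap is in the centerpiece estimate itself. Your sketch of the Lipschitz bound (maximum principle and Hopf's lemma applied to $W$ and to tangential difference quotients, with ``a square-root type degenerate barrier near $\Gamma_{cone}^{\infty}$'') does not deliver what is needed: a square-root barrier yields only $C^{1/2}$ control at the degenerate arc, which is insufficient for the uniform-ellipticity criterion of \Cref{lemma:uniformly elliptic condition} (one needs $|D\phi|$ bounded so that $L^{2}\le 1-\varepsilon_{0}$ away from $\Gamma_{cone}^{\infty}$), and you give no argument that the oblique condition on $\Gamma_{py}$ is compatible with the difference-quotient/Hopf device. The paper's mechanism, which is the actual content of \Cref{lemma: comparison principle,lemma: inf-estimate,lemma: lip-estimate}, is different: pass to spherical coordinates, where the condition on $\Gamma_{py}$ becomes Neumann; sandwich $\psi$ between envelopes of the exact linear solutions $\psi^{\bm{\eta}}=\bm{\eta}\cdot\bm{x}/|\bm{x}|$ via the comparison principle; show, using interior-maximum arguments together with reflection across $\Gamma_{sy1},\Gamma_{sy2}$ and a rotation that flattens the cone surface so points of $\Gamma_{py}$ become interior, that $|D\phi|^{2}$ attains its maximum on $\Gamma_{cone}^{\infty}$; and bound it there by the explicit barrier $\phi^{+}$, since $\phi=\phi^{\pm}$ on the arc. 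This produces a genuinely Lipschitz bound uniform in $(\mu,\varepsilon)$. To salvage your geometric continuation you would still need to import both this gradient-maximum/barrier argument and some regularization of the degenerate boundary before your openness, closedness and limiting steps can be carried out.
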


\begin{remark}\label{non-existence}
When the oncoming flow passes the conical wing $\mathcal{C}_{\sigma_1}^{\sigma_2}$ with an attack angle, the shock front $S$ is no longer a circular cone, as rigorously analyzed in subsection $\ref{sec:2.1}$. This leads to the study of an elliptic equation in a concave domain containing a corner larger than $\pi$. For such a problem, the Lipschitz regularity of solutions near the corners cannot be expected (see, for instance, \cite{X20,L26}). Thus, the existence of solutions is difficult to obtain. We leave this for future work.
\end{remark}

\section{The Proof of Theorem \ref{thm4}}\label{sec:3}
\subsection{The strategy of the proof}\label{sec:3.2.2}
This subsection presents the proof strategy for Theorem \ref{thm4}. Consider the following problem:
\begin{equation}\label{eq: for Fmu}
	\mathcal{G}(\mu,\phi):=c^{2}(\Delta\phi+D^{2}\phi[\bm{\xi},\bm{\xi}])-\mu D^{2}\phi[D\phi-\chi\bm{\xi},D\phi-\chi\bm{\xi}]=0\quad\text{in $\Omega$}
\end{equation}
and 
	\begin{equation}\label{eq:nianxing BC}
		\begin{cases}
	\phi=\sqrt{1+|\bm{\xi}|^{2}}+\varepsilon \quad&\text{on $\Gamma_{cone}^{\infty}$},\\ 
			D\phi\cdot\bm{\nu}_{py}+\chi\tan\sigma_2=0\quad&\text{on $\Gamma_{py}$},\\
			D\phi\cdot\bm{\nu}_{sy1}=0\quad&\text{on $\Gamma_{sy1}$},\\
                D\phi\cdot\bm{\nu}_{sy2}=0\quad&\text{on $\Gamma_{sy2}$},	
		\end{cases}
	\end{equation}
where $\mu\in[0,1]$ and $\varepsilon>0$ are parameters. Here, motivated by the work in \cite{Serre09}, we employ the vanishing viscosity method to deal with the degenerate boundary. Also note that for $\mu=0$, equation \eqref{eq: for Fmu} reduces to a linear elliptic equation, while for $\mu=1$, it becomes the original equation \eqref{eq: 2-D eq.}. This structural property inspires us to solve Problem \ref{prob2} through the continuity method. 

We turn to analyze the ellipticity of equation \eqref{eq: for Fmu}. Let $\phi_{\mu,\varepsilon}$ be the solution to problem \eqref{eq: for Fmu}--\eqref{eq:nianxing BC}. It is straightforward to verify that for each $\mu\in(0,1]$, equation \eqref{eq: for Fmu} is elliptic if
\begin{equation*}
	\phi_{\mu,\varepsilon}>\sqrt{(1+|\bm{\xi}|^2)\Big(1+\frac{\mu-1}{\mu}c^2\Big)}.
\end{equation*}
Thus, the inequality $\phi_{\mu,\varepsilon} > \sqrt{1+|\bm{\xi}|^2}$ ensures the ellipticity of \eqref{eq: for Fmu} for all $\mu\in[0,1]$. In addition, for any $\mu\in[0,1]$, equation \eqref{eq: for Fmu} is uniformly elliptic if
\begin{align}
	\sqrt{1+|\bm{\xi}|^2}+\delta_{0}\leq  \phi_{\mu,\varepsilon}&<C,\label{eq: estimate phi}\\
	\vert D\phi_{\mu,\varepsilon}\vert&<C,\label{eq: lip estimate phi}
\end{align}
where $\delta_{0}$ is a positive number and $C$ is a bounded constant.

Define a set
\begin{equation}\label{eq: J varepsilon}
	\begin{aligned}
		J_{\varepsilon}:=\{\mu\in[0,1]:~&\text{the function }\phi_{\mu,\varepsilon} ~\text{satisfies}~\eqref{eq: for Fmu}\text{-}\eqref{eq:nianxing BC}~\text{with}~\phi_{\mu,\varepsilon}\geq \sqrt{1+\vert\bm{\xi}\vert^2}+\varepsilon~\text{in}\\		&\overline{\Omega}\setminus\overline{\Gamma_{cone}^{\infty}}~\text{and}~\phi_{\mu,\varepsilon}\in C^{1}(\overline{\Omega})\cap C^2\big(\overline{\Omega}\setminus(\{P_3\}\cup\{P_4\})\big)\}.
	\end{aligned}
\end{equation}
We now outline the strategy of our proof. For any fixed $\varepsilon>0$, we apply the continuity method to prove $J_{\varepsilon}=[0,1]$. To be specific, we demonstrate that $J_{\varepsilon}$ is open, closed and $0\in J_{\varepsilon}$. The crucial step here is to derive the uniform estimates \eqref{eq: estimate phi}--\eqref{eq: lip estimate phi}, which are independent of both the parameters $\mu$ and $\varepsilon$. Subsequently, we prove that the limiting function $\mathop{\mathrm{lim}}\limits_{\varepsilon \to 0^+} \phi_{1,\varepsilon}$ is the solution to Problem \ref{prob2}.

\subsection{Lipschitz estimate}\label{sec: prior estimate}
In this subsection, we omit the subscripts of $\phi_{\mu,\varepsilon}$ and $\Phi_{\mu,\varepsilon}$ for convenience. Due to the oblique derivative boundary condition on $\Gamma_{py}$, i.e., \eqref{eq:nianxing BC}$_2$, the techniques utilized in \cite{LY22} are not applicable to deriving the estimate \eqref{eq: estimate phi}. To overcome this difficulty, we reformulate the problem \eqref{eq: for Fmu}--\eqref{eq:nianxing BC} in the spherical coordinates, defined as
\begin{equation*}
    (r,\theta,\varphi):= \big(\sqrt{x^2_1+x^2_2+x^2_3}, \arccos{(x_1/r)}, \arctan{({x_3}/{x_2})}\big),
\end{equation*}
and introduce a function with the form
\begin{align}\label{eq: spherical tran.}
    \rho(\bm{x})=\rho(\bm{\zeta}), \quad \Phi(\bm{x})=r\psi(\bm{\zeta}),
\end{align}
where~$\bm{\zeta}:=(\theta, \varphi)$. In the spherical coordinates, the boundary condition on $\Gamma_{py}$ becomes a Neumann condition (see \eqref{eq:nianxing BC4}$_2$). This fact permits the construction of suitable sub-solutions and super-solutions to establish the $L^\infty$--estimate for $\psi$ (Lemma \ref{lemma: inf-estimate}), thereby yielding the estimate \eqref{eq: estimate phi}.

By abuse of notation but without misunderstanding, in the spherical coordinates, we still use $\Omega$ for the corresponding domain, $\Gamma_{cone}^{\infty},\Gamma_{py},\Gamma_{sy1}$ and $\Gamma_{sy2}$ for the corresponding curves of the boundaries, $\bm{\nu}_{py},\bm{\nu}_{sy1}$ and $\bm{\nu}_{sy2}$ for the corresponding exterior normals.

By the boundedness of $\Omega$, the parameters $\varepsilon$ and $\sqrt{1+|\bm{\xi}|^2} \varepsilon$ are not distinguished, and are collectively denoted by $\varepsilon$ throughout this paper. From \eqref{eq: conical tran.}, \eqref{eq: for Fmu}--\eqref{eq:nianxing BC} and \eqref{eq: spherical tran.}, a trivial calculation yields that the function~$\psi$~satisfies
\begin{multline}\label{eq: psi zhankai}
    c^{2}\big(\dfrac{\partial_{\theta}(\sin\theta\partial_{\theta}\psi)}{\sin\theta}+\dfrac{\partial_{\varphi\varphi} \psi}{\sin^2\theta}+2\psi\big)-\mu\Big(\partial^{2}_\theta\psi\partial_{\theta\theta}\psi+\dfrac{\partial^2_\varphi\psi}{\sin^2\theta}\dfrac{\partial_{\varphi\varphi} \psi}{\sin^2\theta}\\+2\partial_\theta \psi\dfrac{ \partial_\varphi\psi}{\sin\theta}\dfrac{\partial_{\theta\varphi}\psi}{\sin\theta}-\cot\theta\dfrac{\partial^2_\varphi\psi}{\sin^2\theta}\partial_\theta\psi+\big(\partial^{2}_\theta\psi+\dfrac{\partial^2_\varphi\psi}{\sin^2\theta}\big)\psi\Big)
    =0\quad\text{in $\Omega$}
\end{multline}
with 
  \begin{equation}\label{eq:nianxing BC4}
		\begin{cases}
	        \psi=1+\varepsilon \quad&\text{on $\Gamma_{cone}^{\infty}$},\\ 
			D_{\bm{\zeta}}\psi\cdot\bm{\nu}_{py}=0\quad&\text{on $\Gamma_{py}$},\\
			D_{\bm{\zeta}}\psi\cdot\bm{\nu}_{sy1}=0\quad&\text{on $\Gamma_{sy1}$},\\
                D_{\bm{\zeta}}\psi\cdot\bm{\nu}_{sy2}=0\quad&\text{on $\Gamma_{sy2}$},	
		\end{cases}
	\end{equation}
where $D_{\bm{\zeta}}:=(\partial_\theta, \dfrac{1}{\sin\theta}\partial_\varphi)$~represents the gradient operator with respect to $\bm{\zeta}$.     

Denote by $\mathcal{N}_\mu \psi=0$ the equation \eqref{eq: psi zhankai} for simplicity. To obtain the $L^\infty$--estimate of $\psi$, we begin by giving the following comparison principle.

\begin{lemma}\label{lemma: comparison principle}
Let $\Omega_{D}$ be an open bounded domain in the unit sphere that does not contain $\{\theta=0\}$, with piecewise smooth boundary $\partial\Omega_D$ composed of a finite number of curves $\Gamma_i$, where $i=1,\cdots,n$. Also assume that if $\Gamma_k$ and $\Gamma_l$ meet, the formed angle lies in the interval $(0,\pi]$. Suppose that ${\psi}_{\pm}\in C^{0}(\overline{\Omega_{D}})\cap C^{1}(\overline{\Omega_{D}}\setminus\overline{\Gamma_1})\cap C^2(\Omega_{D})$ satisfy ${\psi}_{\pm}>1$ in $\overline{\Omega_{D}}\setminus\overline{\Gamma_1}$. Assume further that for any $\mu\in [0,1]$, the operator $\mathcal{N}_\mu$ is locally uniformly elliptic with respect to either ${\psi}_{+}$ or $\psi_-$, and
	\begin{align*}
	\mathcal{N}_\mu\psi_-\geq0, \quad\mathcal{N}_\mu\psi_+\leq0\quad&\text{in}~\Omega_{D},\\
   \psi_-\leq\psi_+\quad&\text{on}~\Gamma_1,\\
     D_{\bm{\zeta}}\psi_-\cdot\bm{\nu}_{d_j}<D_{\bm{\zeta}}\psi_+\cdot\bm{\nu}_{d_j}\quad&\text{on}~\Gamma_j,
	\end{align*}
where $\bm{\nu}_{d_j}$~is the exterior normal to $\Gamma_j$ with $j=2,\cdots,n$. Then, it follows that ${\psi}_{-}\leq{\psi}_{+}$ in $\overline{\Omega_{D}}\setminus\overline{\Gamma_1}$.
\end{lemma}

\begin{proof}
    Let
	\begin{equation*}
		\bar{\psi}:=\psi_- -\psi_+.
	\end{equation*}
    Since ${\psi}_{\pm}> 1$ in $\overline{\Omega_{D}}\setminus\overline{\Gamma_1}$, we are able to define a function $z>0$ by
	\begin{equation}\label{eq:3.6}
		{\psi}(\bm{\zeta})=\cosh z(\bm{\zeta}).
	\end{equation}
    A direct computation gives
	\begin{multline}\label{eq:3.9}	
    \big(1+|D_{\bm{\zeta}}z|^2-\mu\partial^{2}_\theta z\big)\partial_{\theta\theta}z+(1+|D_{\bm{\zeta}}z|^2-\mu\dfrac{\partial^2_\varphi z}{\sin^2\theta})\dfrac{\partial_{\varphi\varphi} z}{\sin^2\theta}-2\mu\partial_\theta z\dfrac{ \partial_\varphi z}{\sin\theta}\dfrac{\partial_{\theta\varphi}z}{\sin\theta}\\+
    \cot\theta\big(1+|D_{\bm{\zeta}}z|^2+\mu\dfrac{\partial^2_\varphi z}{\sin^2\theta}\big)\partial_\theta z+(2+(1-\mu)|D_{\bm{\zeta}}z|^2)\frac{1+|D_{\bm{\zeta}}z|^2}{\tanh z}=0.
	\end{multline}
    The principal coefficients of \eqref{eq:3.9} is independent of $z$. Besides, the lower-order term of this equation is non-increasing in $z$ for any fixed $(\bm{\zeta},Dz)\in \Omega_{D}\times \mathbb{R}^{2}$. Thus, with the structural feature of \eqref{eq:3.9}, and using the same argument as \cite[Lemma \ref{lemma: comparison principle}]{LY22}, we have 
	\begin{equation}\label{eq: for assert}
\sup_{\Omega_D}\bar{\psi}\leq\sup_{\partial\Omega_D}\bar{\psi}.
	\end{equation}
    Moreover, the condition $D_{\bm{\zeta}}\psi_-\cdot\bm{\nu}_{d_j}<D_{\bm{\zeta}}\psi_+\cdot\bm{\nu}_{d_j}$ on $\Gamma_j$ and the assumption of the regularity of  ${\psi}_{\pm}$ imply that the function $\bar{\psi}$ cannot attain its maximum on $\Gamma_j$ and the corners, i.e.,
    \begin{equation}\label{eq:bd est}		  \sup_{\Omega_D}\bar{\psi}\leq\sup_{\partial\Omega_D}\bar{\psi}\leq\sup_{\Gamma_1}\bar{\psi}.
	\end{equation}
  Therefore, from $\psi_-\leq\psi_+$ on $\Gamma_1$, one knows $\bar{\psi}\leq 0$ in $\overline{\Omega_{D}}\setminus\overline{\Gamma_1}$. This completes the proof.
\end{proof}

We next utilize the above lemma to derive the $L^\infty$--estimate for $\psi$.
\begin{lemma}\label{lemma: inf-estimate}
	Let $\psi \in C^{0}(\overline{\Omega})\cap C^{1}(\overline{\Omega}\setminus\overline{\Gamma_{cone}^{\infty}})\cap C^2\big(\overline{\Omega}\setminus(\overline{\Gamma_{cone}^{\infty}}\cup\{P_3\}\cup\{P_4\})\big)$ satisfy \eqref{eq: psi zhankai}--\eqref{eq:nianxing BC4} and $\psi\geq 1+\varepsilon$ in $\overline{\Omega}\setminus\overline{\Gamma_{cone}^{\infty}}$. Then there exists a positive constant $C$, which is independent of $\mu$ and $\varepsilon$, such that
	\begin{equation}\label{eq1:3.13}
		1+\varepsilon<{\psi}\leq C\quad\text{in $\overline{\Omega}\setminus\overline{\Gamma_{cone}^{\infty}}$}.
	\end{equation}
    Moreover, for any subdomain $\Omega_{sub}$ that is strictly away from the degenerate boundary $\Gamma_{cone}^{\infty}$, we have
        \begin{equation}\label{eq:strictly bounded}
		1+\varepsilon+\delta_0<{\psi}\leq C,
	\end{equation}
    where the constant $\delta_0>0$ depends only on the domain $\Omega$. 
\end{lemma}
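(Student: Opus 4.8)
The plan is to read off both halves of \eqref{eq1:3.13} and the refinement \eqref{eq:strictly bounded} from the comparison principle of \Cref{lemma: comparison principle}, after passing to the variable $s=\operatorname{arcosh}\psi$ of \eqref{eq:3.6} (legitimate since $\psi>1$ in $\Omega$): in these variables the oblique conditions on $\Gamma_{py},\Gamma_{sy1},\Gamma_{sy2}$ become homogeneous Neumann conditions, and the equation becomes \eqref{eq:3.9}, whose principal part is uniformly elliptic (a short computation with the substitution $\eta_2=\xi_2/\sin\theta$ and $\mu\le1$ shows its coefficient matrix dominates the identity, uniformly in $\mu$ and in $D_{\bm\zeta}s$), and whose lowest-order term $(2+(1-\mu)|D_{\bm\zeta}s|^2)(1+|D_{\bm\zeta}s|^2)/\tanh s$ is strictly positive and non-increasing in $s$. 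The two barriers I would use are themselves exact solutions of \eqref{eq: for Fmu}: any potential $\phi$ that is affine in $\bm\xi$ has $D^2\phi\equiv0$ and hence solves \eqref{eq: for Fmu} for \emph{every} $\mu\in[0,1]$ (these are the uniform flows $\Phi=\bm a\cdot\bm x$), while any $\phi=a\sqrt{1+|\bm\xi|^2}$ (i.e. $\psi\equiv a$, the spherically symmetric flows) is a sub-solution with margin $\mathcal{N}_\mu\psi=2(a^2-1)a>0$ when $a>1$.

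For the upper bound I would compare $\psi$ with the tilted uniform flow $\phi^{+}=(1+\varepsilon)v_{3\infty}+M_\varsigma+\varsigma(\xi_1-\xi_2)$, equivalently $\Phi^{+}=\big((1+\varepsilon)v_{3\infty}+M_\varsigma\big)x_3+\varsigma(x_1-x_2)$, for small $\varsigma>0$, where $M_\varsigma=O(\varsigma)$ is chosen so that $\psi^{+}=\phi^{+}/\sqrt{1+|\bm\xi|^2}\ge1+\varepsilon$ on $\Gamma_{cone}^{\infty}$ (there $\sqrt{1+|\bm\xi|^2}=v_{3\infty}$). Since $\psi^{+}$ comes from an affine $\phi$, it solves \eqref{eq: for Fmu} identically and $\mathcal{N}_\mu$ is uniformly elliptic with respect to it; moreover $D_{\bm\zeta}\psi^{+}\cdot\bm\nu>0$ strictly on each of $\Gamma_{py},\Gamma_{sy1},\Gamma_{sy2}$, because the nearly vertical uniform flow is directed into the solid cone ($\nabla_{\bm x}(x_3)\cdot\bm n_w=\tan\sigma_2>0$) and the $\varsigma(x_1-x_2)$-tilt supplies the strict sign on the two symmetry planes, where the unperturbed flow is merely tangential ($\nabla_{\bm x}(x_1-x_2)\cdot(1,0,0)>0$, $\cdot(0,-1,0)>0$, $\cdot\bm n_w=1+\cot\sigma_1\tan\sigma_2>0$). \Cref{lemma: comparison principle} with $\psi$ as sub- and $\psi^{+}$ as super-solution then yields $\psi\le\psi^{+}$, and letting $\varsigma\to0^{+}$ gives $\psi\le(1+\varepsilon)v_{3\infty}/\sqrt{1+|\bm\xi|^2}\le(1+\varepsilon)v_{3\infty}=:C$, uniformly in $\mu$ and (for $\varepsilon\le1$) in $\varepsilon$.

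For $\psi>1+\varepsilon$ in $\overline\Omega\setminus\overline{\Gamma_{cone}^{\infty}}$ I would argue directly on $s$: by \eqref{eq:3.9}, $\mathcal L s\le-2/\tanh s<0$, where $\mathcal L$ is the second-order part of \eqref{eq:3.9} (no zeroth-order term), while $s=\operatorname{arcosh}(1+\varepsilon)$ on $\Gamma_{cone}^{\infty}$. The strong minimum principle forbids $s$ from attaining its infimum at an interior point, and Hopf's lemma—applied on the relatively open arcs $\Gamma_{py},\Gamma_{sy1},\Gamma_{sy2}$, where $s\in C^1$ and, $|D_{\bm\zeta}s|$ being locally bounded, the operator is uniformly elliptic—forbids it there, except possibly at $P_3,P_4$, which are ruled out by a routine local barrier since the two Neumann faces meeting at each are transversal. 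Hence $\inf_{\overline\Omega}s$ is attained only on $\Gamma_{cone}^{\infty}$, and the strong minimum principle then gives strict inequality off $\Gamma_{cone}^{\infty}$. (Alternatively, one may compare $\psi$ with a strict sub-solution of the form $\big((1+\varepsilon)-\varsigma K\big)-\varsigma\,\bm n_w\!\cdot\!\widehat{\bm x}$—exact away from the $\sqrt{}$-part and with the correct strict Neumann signs—and let $\varsigma\to0^{+}$.)

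The genuine obstacle is the quantitative statement \eqref{eq:strictly bounded} with $\delta_0$ \emph{independent of $\mu$ and $\varepsilon$}: the $\varepsilon$-problem is non-degenerate, but its ellipticity constant near $\Gamma_{cone}^{\infty}$ collapses as $\varepsilon\to0$, so a naive compactness/Harnack argument only produces an $\varepsilon$-dependent gap. My plan is to work with $\tilde s:=s-\operatorname{arcosh}(1+\varepsilon)\ge0$, which vanishes on $\Gamma_{cone}^{\infty}$, inherits the Neumann conditions, and—using the already-proven bound $s\le\operatorname{arcosh}(C)$ so that $\tanh s$ stays bounded away from $1$—satisfies $\mathcal L\tilde s\le-2\kappa_0<0$ with $\kappa_0$ depending only on $v_{3\infty}$. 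Since the coefficient matrix of $\mathcal L$ dominates the flat Laplacian $\partial_{\theta\theta}+\partial_{\varphi\varphi}$ uniformly, one would then seek a fixed convex sub-barrier $\underline w$ solving $(\partial_{\theta\theta}+\partial_{\varphi\varphi})\underline w=-2\kappa_0$ in $\Omega$ with $\underline w=0$ on $\Gamma_{cone}^{\infty}$ and $\partial_{\bm\nu}\underline w=0$ on the Neumann parts, so that $\underline w$ is a sub-solution of $\mathcal L u=-2\kappa_0$; comparison gives $\tilde s\ge\underline w\ge\delta_0'>0$ on any $\Omega_{sub}$ strictly inside, and then $\psi=\cosh(\operatorname{arcosh}(1+\varepsilon)+\tilde s)\ge(1+\varepsilon)+(\cosh\delta_0'-1)$, i.e.\ \eqref{eq:strictly bounded} with $\delta_0=\cosh\delta_0'-1$. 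The delicate point—where I expect to spend most effort—is ensuring the convexity/monotonicity of $\underline w$ so that the unbounded, $D\tilde s$-dependent first-order term $\cot\theta\,(1+|D_{\bm\zeta}s|^2+\dots)\,\partial_\theta s$ of \eqref{eq:3.9} does not destroy the sub-solution inequality on $\Omega\subset\{\theta>\pi/2\}$; should this resist an explicit construction, the fallback is to combine the strict inequality $\psi>1+\varepsilon$ from the previous step with the uniform Lipschitz estimate (cf.\ \eqref{eq: lip estimate phi}) and interior elliptic compactness on sets bounded away from $\Gamma_{cone}^{\infty}$, where $\psi\equiv1+\varepsilon$ would contradict $\mathcal{N}_\mu(1+\varepsilon)=2\big((1+\varepsilon)^2-1\big)(1+\varepsilon)\neq0$.
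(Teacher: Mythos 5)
Your upper bound and your qualitative strict bound $\psi>1+\varepsilon$ are essentially sound (the supersolution you tilt is exactly one member of the family of uniform-flow exact solutions $\psi^{\bm{\eta}}=\bm{\eta}\cdot\bm{x}/|\bm{x}|$ that the paper compares against via \Cref{lemma: comparison principle}), apart from two soft spots: the principal part of \eqref{eq:3.9} is \emph{not} uniformly elliptic uniformly in $D_{\bm{\zeta}}s$ when $\mu=1$ (the eigenvalue ratio is $\frac{1+|D_{\bm{\zeta}}s|^2}{1+(1-\mu)|D_{\bm{\zeta}}s|^2}$), only locally so where $|D_{\bm{\zeta}}s|$ is bounded — harmless for your local maximum-principle steps but it should not be claimed; and the exclusion of a minimum at $P_3,P_4$ is not a ``routine'' Hopf/barrier matter, since the two Neumann faces meet at an obtuse angle and $\psi$ is only $C^1$ there.

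The genuine gap is the heart of the lemma, the estimate \eqref{eq:strictly bounded} with $\delta_0$ independent of $\mu$ and $\varepsilon$. Your plan (a fixed concave sub-barrier $\underline w$ for a mixed Dirichlet--Neumann Poisson problem, to be inserted under the fully nonlinear operator \eqref{eq:3.9}) is left unconstructed precisely at the point you identify — controlling the gradient-dependent first-order term — and your fallback is circular: interior compactness/Harnack with constants independent of $\mu,\varepsilon$ presupposes uniform ellipticity on the subdomain, which is exactly what the gap $\delta_0$ is meant to supply. The paper closes this with a one-line barrier you already have in hand: the same linear exact solutions used as supersolutions also serve as subsolutions. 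For any point $\bm{x}_0/r$ of a compact subdomain, take $\bm{\eta}_0=(1+\varepsilon+\delta_0)\,\bm{x}_0/r$ with $\delta_0$ small depending only on the spherical distance to $\Gamma_{cone}^{\infty}$; then $\psi^{\bm{\eta}_0}<1+\varepsilon$ on $\Gamma_{cone}^{\infty}$, it is an exact solution of \eqref{eq: for Fmu} for every $\mu$, and \Cref{lemma: comparison principle} gives $\psi\geq\psi^{\bm{\eta}_0}=1+\varepsilon+\delta_0$ at $\bm{x}_0/r$ (the sets $\Sigma_-$ and $\widetilde\Sigma_-$ in the paper). The variant $\widetilde\Sigma_-$, where $O^{\bm{\eta}}\in\Omega$ so that only $\psi^{\bm{\eta}}<1+\varepsilon$ on $\Gamma_{cone}^{\infty}$ is required and the Neumann signs are automatically strict, yields the uniform gap also at $P_3,P_4$ and at interior points of $\Gamma_{py},\Gamma_{sy1},\Gamma_{sy2}$ — points your ``strictly inside'' formulation does not cover, although the later steps of the paper (uniform ellipticity up to these boundary portions as $\varepsilon\to0$) need exactly that. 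Without this, or a completed construction of $\underline w$ valid up to the Neumann boundary, the proof of \eqref{eq:strictly bounded} is missing.
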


\begin{proof}
	 First, we find an exact solution to equation \eqref{eq: psi zhankai}. From \eqref{eq: conical tran.} and \eqref{eq: for Fmu}, the corresponding $\Phi$ satisfies
	\begin{equation}\label{eq:3.25}
		(|\nabla_{\bm{x}}\Phi|^2-1)\Delta_{\bm{x}}\Phi-\mu D^2_{\bm{x}}\Phi[\nabla_{\bm{x}}\Phi,\nabla_{\bm{x}}\Phi]=0.
	\end{equation}
     Clearly, for any constant vector $\bm{\eta}\in \mathbb{R}^{3}$, the linear function $\Phi^{\bm{\eta}}=\bm{\eta}\cdot\bm{x}$ is a solution to \eqref{eq:3.25}. We deduce from \eqref{eq: spherical tran.} that
	\begin{equation}\label{eq:3.26}
		{\psi}^{\bm{\eta}}(\bm{\zeta})=\bm{\eta}\cdot\frac{\bm{x}}{|\bm{x}|}
	\end{equation}
	is an exact solution to equation \eqref{eq: psi zhankai}. In addition, since
	\begin{align}\label{eq: d phi first}
            \begin{split}
	    \partial_\theta {\psi}^{\bm{\eta}}(\bm{\zeta})&=-\eta_1\sin\theta+\eta_2\cos\theta\cos\varphi+\eta_3\cos\theta\sin\varphi,\\
            \dfrac{\partial_\varphi {\psi}^{\bm{\eta}}(\bm{\zeta})}{\sin\theta}&=-\eta_2 \sin\varphi+\eta_3\cos\varphi,
            \end{split}
	\end{align}	
    the function ${\psi}^{\bm{\eta}}$ is Lipschitz bounded in the domain $\Omega$.
    
    Next, we construct the super-solutions to problem \eqref{eq: psi zhankai}--\eqref{eq:nianxing BC4}. Let $P^{\bm{\eta}}:=\frac{\bm{\eta}}{|\bm{\eta}|}$ denote the intersection of the vector $\bm{\eta}$ with the unit sphere, and let
    \begin{equation*}
         \Lambda:=\{(\theta,\varphi):\theta\in(0,\frac{\pi}{2}),\varphi\in(\frac{3\pi}{2},2\pi)\}.
    \end{equation*}
    Consider the following set with a fixed $\varepsilon>0$
	\begin{equation}\label{eq:4.8}
		\Sigma_{+}:=\{\bm{\eta}\in\mathbb{R}^{3}: P^{\bm{\eta}}\in \Lambda, \text{ and }{\psi}^{\bm{\eta}}>1+\varepsilon\text{ on }\Gamma_{cone}^{\infty}\}.
	\end{equation}
    Note that for any fixed $\bm{\eta}\in \mathbb{R}^{3}$, the function ${\psi}^{\bm{\eta}}$ is monotonically decreasing in the angle between $\frac{\bm{x}}{|\bm{x}|}$ and $\bm{\eta}$. Hence, when $P^{\bm{\eta}}\in \Lambda$, the values of $D_{\bm{\zeta}}\psi\cdot\bm{\nu}_{py}$ on $\Gamma^{\infty}_{cone}$, $D_{\bm{\zeta}}\psi\cdot\bm{\nu}_{sy1}$ on $\Gamma_{sy1}$, and $D_{\bm{\zeta}}\psi\cdot\bm{\nu}_{sy2}$ on $\Gamma_{sy2}$ are strictly positive. This indicates that
   when $\bm{\eta}\in \Sigma_{+}$, the function ${\psi}^{\bm{\eta}}$ satisfies
	\begin{equation*}
		\begin{cases}
			\mathcal{N}_\mu {\psi}^{\bm{\eta}}= 0\quad&\text{in $\Omega$},\\
			{\psi}^{\bm{\eta}}>1+\varepsilon\quad&\text{on $\Gamma_{cone}^{\infty}$},\\
            D_{\bm{\zeta}}\psi\cdot\bm{\nu}_{py}>0\quad&\text{on $\Gamma_{py}$},\\
			D_{\bm{\zeta}}\psi\cdot\bm{\nu}_{sy1}>0\quad&\text{on $\Gamma_{sy1}$},\\
                D_{\bm{\zeta}}\psi\cdot\bm{\nu}_{sy2}>0\quad&\text{on $\Gamma_{sy2}$}.
		\end{cases}
	\end{equation*}
   Additionally, it follows from \eqref{eq:3.26} that ${\psi}^{\bm{\eta}}>1+\varepsilon$ holds in $\overline{\Omega}\setminus\overline{\Gamma_{cone}^{\infty}}$. Combining this fact and \eqref{eq: d phi first}, the operator $\mathcal{N}_\mu $ is locally uniformly elliptic with respect to ${\psi}^{\bm{\eta}}$. Applying Lemma \ref{lemma: comparison principle}, we conclude that $\psi\leq {\psi}^{\bm{\eta}}$ in $\overline{\Omega}\setminus\overline{\Gamma_{cone}^{\infty}}$. Define $\psi^+$ as the infimum of all such super-solutions. Then, $\psi\leq \psi^+$ holds in $\overline{\Omega}\setminus\overline{\Gamma_{cone}^{\infty}}$. Moreover, thanks to the convexity of $\Gamma_{cone}^{\infty}$, we obtain
	\begin{equation*}
		\psi^+=1+\varepsilon\quad\text{on $\Gamma_{cone}^{\infty}$}.
	\end{equation*}
  
   Finally, we construct the sub-solutions to problem \eqref{eq: psi zhankai}--\eqref{eq:nianxing BC4}. Consider the following set with a fixed $\varepsilon>0$
	\begin{equation}\label{eq: Sigma fu}
		\Sigma_{-}:=\{\bm{\eta}\in\mathbb{R}^{3}: P^{\bm{\eta}}\in \Omega,\text{ and }{\psi}^{\bm{\eta}}<1+\varepsilon~\text{on}~\Gamma_{cone}^{\infty}\},
	\end{equation}
  Similarly, when $\bm{\eta}\in {\Sigma}_{-}$, the function ${\psi}^{\bm{\eta}}$ satisfies
	\begin{equation*}
		\begin{cases}
			\mathcal{N}_\mu {\psi}^{\bm{\eta}}= 0\quad&\text{in $\Omega$},\\
			{\psi}^{\bm{\eta}}<1+\varepsilon\quad&\text{on $\Gamma_{cone}^{\infty}$},\\
            D_{\bm{\zeta}}\psi\cdot\bm{\nu}_{py}<0\quad&\text{on $\Gamma_{py}$},\\
			D_{\bm{\zeta}}\psi\cdot\bm{\nu}_{sy1}<0\quad&\text{on $\Gamma_{sy1}$},\\
                D_{\bm{\zeta}}\psi\cdot\bm{\nu}_{sy2}<0\quad&\text{on $\Gamma_{sy2}$}.
		\end{cases}
	\end{equation*}
  From Remark \ref{remark:lip on supsolution} below, and using the super-solution $\psi^+$ above and the assumption of $\psi\geq 1+\varepsilon$ in $\overline{\Omega}\setminus\overline{\Gamma_{cone}^{\infty}}$, we deduce the locally uniform ellipticity of $\mathcal{N}_\mu$ with respect to $\psi$. Thus, using Lemma \ref{lemma: comparison principle}, we have
	\begin{equation}\label{eq1:11}
		{\psi}\geq {\psi}^{\bm{\eta}}\quad\text{in $\overline{\Omega}\setminus\overline{\Gamma_{cone}^{\infty}}$},
	\end{equation}
  for all $\bm{\eta}\in {\Sigma}_{-}$.  
   
    Define $\psi^-$ as the supremum of all such sub-solutions. We assert here that for any subdomain $\Omega_{sub}$ strictly away from the degenerate boundary $\Gamma_{cone}^{\infty}$, there exists a positive constant $\delta_0$, depending only on $\Omega_{sub}$, such that
	\begin{equation}\label{eq1:3.21}
		{\psi}^{-}\geq 1+\varepsilon+\delta_0\quad\text{in $\Omega_{sub}$}.
	\end{equation}
   Given any point~$\bm{x}_{0}=(x_{10},x_{20},x_{30})$ satisfying $\frac{(x_{10},x_{20},x_{30})}{r}\in \Omega_{sub}$, there exists a sufficiently small constant $0<\delta_0\ll 1$ such that the constant vector $\bm{\eta}_{0}=(1+\varepsilon+\delta_0)\frac{(x_{10},x_{20},x_{30})}{r}\in\Sigma_{-}$. The assertion then follows directly from the definition of $\psi^-$.
 
   Thanks to the continuity of ${\psi}^{-}$, the inequality \eqref{eq1:3.21} means ${\psi}^{-}>1+\varepsilon$ in $\overline{\Omega}\setminus\overline{\Gamma_{cone}^{\infty}}$ and ${\psi}^{-}\geq 1+\varepsilon$ on $\Gamma_{cone}^{\infty}$. Besides, we observe that ${\psi}^{-}\leq 1+\varepsilon$ on $\Gamma_{cone}^{\infty}$. This leads to
	\begin{equation}\label{eq:3.31}
		{\psi}^{-}= 1+\varepsilon\quad\text{on $\Gamma_{cone}^{\infty}$}.
	\end{equation}
 
  In conclusion, we get
	\begin{align}
		1+\varepsilon<{\psi}^-\leq \psi\leq \psi^+\quad&\text{in}~  \overline{\Omega}\setminus\overline{\Gamma_{cone}^{\infty}}\label{eq1:4.19}
		\shortintertext{and}
		\psi=\psi^\pm=1+\varepsilon\quad&\text{on}~ \Gamma_{cone}^{\infty}.\label{eq1:4.20}
	\end{align}
	The proof is complete. 
\end{proof}

\begin{remark}\label{remark:relax cd}
From the above discussion, the condition $\psi\geq 1+\varepsilon$ in $\bar{\Omega}\setminus\overline{\Gamma_{cone}^{\infty}}$ stated in Lemma $\ref{lemma: inf-estimate}$ is intended to ensure the locally uniform ellipticity of $\mathcal{N}_\mu$ with respect to $\psi$; otherwise, this condition can be relaxed to $\psi> 1$ in $\bar{\Omega}\setminus\overline{\Gamma_{cone}^{\infty}}$.
\end{remark}

Let $\phi^{\pm}:=\sqrt{1+|\bm{\xi}|^2} \psi^{\pm}$. Then, from \eqref{eq: conical tran.} and \eqref{eq: spherical tran.}, together with estimate \eqref{eq1:4.19}--\eqref{eq1:4.20}, we have
\begin{equation}\label{eq1:3.17}
	\sqrt{1+|\bm{\xi}|^2}+\varepsilon<\phi^-\leq{\phi}\leq\phi^+\leq C\quad\text{in $\overline{\Omega}\setminus\overline{\Gamma_{cone}^{\infty}}$}
\end{equation}
and
\begin{equation}\label{eq1:3.18}
	{\phi}={\phi}^{\pm}=\sqrt{1+|\bm{\xi}|^2}+\varepsilon\quad\text{on $\Gamma_{cone}^{\infty}$}.
\end{equation}
where the constant $C$ is independent of $\mu$ and $\varepsilon$. 

Now, we turn to establish the Lipschitz estimate \eqref{eq: lip estimate phi}.

\begin{lemma}\label{lemma: lip-estimate}
	Let $\phi \in C^{0}(\overline{\Omega})\cap C^{1}(\overline{\Omega}\setminus\overline{\Gamma_{cone}^{\infty}})\cap C^2\big(\overline{\Omega}\setminus(\overline{\Gamma_{cone}^{\infty}}\cup\{P_3\}\cup\{P_4\})\big)$ satisfy \eqref{eq: for Fmu}--\eqref{eq:nianxing BC} and ${\phi}\geq\sqrt{1+|\bm{\xi}|^2}+\varepsilon$ in $\overline{\Omega}\setminus\overline{\Gamma_{cone}^{\infty}}$. Then there exists a constant $C>0$, which is independent of $\mu$ and $\varepsilon$, such that
	\begin{equation}\label{eq:3.42} 
		\|D{\phi}\|_{L^{\infty}(\Omega)}\leq C.
	\end{equation}
\end{lemma}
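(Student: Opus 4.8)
The plan is to prove \eqref{eq:3.42} in two stages: a gradient bound in a fixed one-sided neighborhood $\mathcal{S}$ of the degenerate boundary $\Gamma_{cone}^{\infty}$ in $\bar{\Omega}$, obtained by trapping $\phi$ between Lipschitz sub- and supersolutions; and a gradient bound on $\Omega_{*}:=\Omega\setminus\mathcal{S}$, where \eqref{eq: for Fmu} carries a uniform lower ellipticity bound and the usual gradient-estimate machinery applies. Two features make this harder than the Dirichlet situation of \cite{LY22,Serre09,Serre11}: uniform ellipticity fails on $\mathcal{S}$, precisely where the estimate is hardest, so interior regularity is of no help there; and the condition on $\Gamma_{py}$ is oblique. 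Both are handled by passing, as in the proof of \Cref{lemma: inf-estimate}, to the spherical-coordinate formulation \eqref{eq: psi zhankai}--\eqref{eq:nianxing BC4}, where $\Gamma_{py}$ carries a genuine Neumann condition, and by exploiting the family $\psi^{\bm{\eta}}$ of linear exact solutions.

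For the first stage, recall from \Cref{lemma: inf-estimate} and \eqref{eq1:3.17}--\eqref{eq1:3.18} that $\phi^{\pm}:=\sqrt{1+|\bm{\xi}|^{2}}\,\psi^{\pm}$ satisfy $\phi^{-}\le\phi\le\phi^{+}$ in $\bar{\Omega}$ and $\phi^{-}=\phi=\phi^{+}=\sqrt{1+|\bm{\xi}|^{2}}+\varepsilon$ on $\Gamma_{cone}^{\infty}$, and that $\psi^{\pm}$ --- each a pointwise infimum or supremum of restrictions to the unit sphere of the linear functions $\bm{\eta}\cdot\bm{x}/|\bm{x}|$ over a set of $\bm{\eta}$ with $|\bm{\eta}|$ bounded uniformly in $\mu,\varepsilon$ --- are Lipschitz with a constant independent of $\mu,\varepsilon$; hence so are $\phi^{\pm}$ on $\bar{\Omega}$. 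Consequently, for $\bm{\xi}\in\mathcal{S}$ with nearest point $\bm{\xi}_{0}\in\Gamma_{cone}^{\infty}$,
\begin{equation*}
	\bigl|\phi(\bm{\xi})-\sqrt{1+|\bm{\xi}|^{2}}-\varepsilon\bigr|\le C\,\mathrm{dist}(\bm{\xi},\Gamma_{cone}^{\infty}),
\end{equation*}
with $C$ independent of $\mu,\varepsilon$. A direct computation shows that for $\mu=1$ the principal part $a_{\mu}$ of \eqref{eq: for Fmu} has $\det a_{\mu}=c^{2}(\phi^{2}-1-|\bm{\xi}|^{2})$ and $\operatorname{tr} a_{\mu}\le c^{2}(2+|\bm{\xi}|^{2})$, while $a_{\mu}-a_{1}$ is positive semidefinite for $\mu\le1$; thus the ellipticity of \eqref{eq: for Fmu} degenerates on $\Gamma_{cone}^{\infty}$ at the rate of $\phi^{2}-(1+|\bm{\xi}|^{2})$, i.e.\ of $\phi-\sqrt{1+|\bm{\xi}|^{2}}$, which by the displayed bound is at worst linear in $\mathrm{dist}(\cdot,\Gamma_{cone}^{\infty})$ --- a Keldysh-type degeneracy. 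I would then, around each $\bm{\xi}_{0}\in\Gamma_{cone}^{\infty}$, rescale shrinking neighborhoods anisotropically in accordance with this degeneracy (stretching the normal and tangential directions by different powers of the scale, matched to the eigen-directions of $a_{\mu}$ near $\Gamma_{cone}^{\infty}$), so that \eqref{eq: for Fmu} passes to an equation uniformly elliptic in the rescaled region with constants independent of the scale and of $\mu,\varepsilon$, while the rescaled unknown has oscillation $O(1)$ by the displayed bound; standard interior and oblique/Neumann boundary $C^{1,\alpha}$ estimates for the rescaled problem, unscaled, then give $\|D\phi\|_{L^{\infty}(\mathcal{S})}\le C$. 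Near the corners $P_{1},P_{2}$, where $\Gamma_{cone}^{\infty}$ meets $\Gamma_{py}$ and $\Gamma_{sy1}$, this is supplemented by the affine barriers $\phi^{\bm{\eta}}$ (exact solutions of \eqref{eq: for Fmu} in the $\bm{\xi}$-variables) together with the Hopf-type one-sided derivative bounds that $\phi^{\pm}$ yield by touching $\phi$ along $\Gamma_{cone}^{\infty}$.

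For the second stage, on $\overline{\Omega_{*}}$ we have $\phi\ge\sqrt{1+|\bm{\xi}|^{2}}+\delta_{0}$ for some fixed $\delta_{0}=\delta_{0}(\mathcal{S})>0$ by \Cref{lemma: inf-estimate}; combined with $\det a_{\mu}\ge c^{2}(\phi^{2}-1-|\bm{\xi}|^{2})$ and $\operatorname{tr} a_{\mu}\le c^{2}(2+|\bm{\xi}|^{2})$, this yields a lower bound $\lambda_{\min}(a_{\mu})\ge c_{0}\delta_{0}>0$ on $\overline{\Omega_{*}}$ that is independent of $\mu,\varepsilon$ and of $|D\phi|$. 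Working with \eqref{eq: psi zhankai}--\eqref{eq:nianxing BC4}, so that $\Gamma_{py}$ becomes Neumann and $|D\phi|$ is comparable to $|D_{\bm{\zeta}}\psi|$ on $\overline{\Omega}$, I would run the Bernstein technique on $w=|D_{\bm{\zeta}}\psi|^{2}\beta(\psi)$ for a suitable positive $\beta$: differentiating \eqref{eq: psi zhankai} and absorbing the lower-order contributions by means of the lower ellipticity bound and the bound on $\psi$ from \Cref{lemma: inf-estimate} gives $\mathcal{L}w\ge0$ for the linearized operator $\mathcal{L}$ (degenerate elliptic, but still obeying the weak maximum principle, as in the $\cosh$-reduction used in the proof of \Cref{lemma: comparison principle}); hence $\max_{\overline{\Omega_{*}}}w=\max_{\partial\Omega_{*}}w$. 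On the artificial interface $\partial\Omega_{*}\cap\Omega\subset\mathcal{S}$, $w$ is controlled by the first stage; on $\Gamma_{py},\Gamma_{sy1},\Gamma_{sy2}$ the homogeneous Neumann conditions and the $L^{\infty}$ bound give a boundary-gradient bound by the classical argument; and at the right-angle corners $P_{3},P_{4}$ an even reflection across $\Gamma_{sy1}$ and $\Gamma_{sy2}$ reduces the estimate to an interior one. Passing back to $\phi$ via \eqref{eq: conical tran.} and \eqref{eq: spherical tran.} and combining the two stages gives \eqref{eq:3.42}.

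The main obstacle is the first stage: at $\Gamma_{cone}^{\infty}$ the equation is not uniformly elliptic, so the Lipschitz bound cannot be obtained from interior regularity and must instead be extracted from the pointwise sandwiching by $\phi^{\pm}$ together with the correct anisotropic scaling of the Keldysh-type degeneracy, and this must be done uniformly in $\mu$ and $\varepsilon$. The hardest point is near the corners $P_{1},P_{2}$, where $\Gamma_{cone}^{\infty}$ abuts the \emph{oblique} boundary $\Gamma_{py}$; it is here that the spherical-coordinate reformulation --- which converts that oblique condition into a Neumann one --- is indispensable.
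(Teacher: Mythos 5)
Your overall architecture differs from the paper's, and the part you yourself flag as the main obstacle --- Stage 1, the gradient bound in a full one-sided neighborhood $\mathcal{S}$ of $\Gamma_{cone}^{\infty}$ --- contains a genuine gap. Your anisotropic rescaling argument needs the rescaled equation to be uniformly elliptic at scale $r=\mathrm{dist}(\cdot,\Gamma_{cone}^{\infty})$ with constants independent of $r$, $\mu$, $\varepsilon$. Since $\det a_1= c^{2}\bigl(\phi^{2}-(1+|\bm{\xi}|^{2})\bigr)$, this requires a \emph{lower} bound $\phi-\sqrt{1+|\bm{\xi}|^{2}}\gtrsim \mathrm{dist}$, but the barriers of \Cref{lemma: inf-estimate} give no such thing: the sandwich \eqref{eq1:3.17}--\eqref{eq1:3.18} yields only the upper bound $\phi-\sqrt{1+|\bm{\xi}|^{2}}-\varepsilon\le C\,\mathrm{dist}$, while the subsolutions $\psi^{\bm{\eta}}$ (linear functions restricted to the sphere, cut off by the requirement $\psi^{\bm{\eta}}<1+\varepsilon$ on $\Gamma_{cone}^{\infty}$) give a lower bound that degenerates \emph{quadratically} in the distance, not linearly. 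So the claimed ``Keldysh-type degeneracy at worst linear in dist'' is one-sided and does not justify scale-independent ellipticity of the rescaled problems; indeed, for Keldysh-type degeneracies one generically expects only H\"older (square-root) regularity up to the boundary without further structure. There is also a circularity you do not address: the ellipticity ratio involves $c^{2}=|D\phi|^{2}+|\chi|^{2}-1$, i.e.\ the very quantity being estimated. Since Stage 2 feeds off Stage 1 through the artificial interface $\partial\Omega_{*}\cap\Omega$, the whole proposal is left without a proof of \eqref{eq:3.42}.

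The paper's route avoids the neighborhood estimate entirely and is much lighter. Because $\phi$ coincides with the Lipschitz barriers $\phi^{\pm}$ on $\Gamma_{cone}^{\infty}$ and is trapped between them inside, the full gradient of $\phi$ \emph{on} $\Gamma_{cone}^{\infty}$ is already controlled, which is \eqref{eq:3.34}; no bound in a neighborhood is needed. The global bound then follows from the interior maximum principle for $|D\phi|^{2}$ of \cite[Lemma 3.5]{LY22}, which requires only ellipticity (guaranteed by $\phi>\sqrt{1+|\bm{\xi}|^{2}}$), not uniform ellipticity, so the degeneracy at $\Gamma_{cone}^{\infty}$ causes no trouble: points of $\Gamma_{sy1}\cup\Gamma_{sy2}$ are interior after even reflection, points of $\Gamma_{py}$ become interior after the rotation \eqref{eq:4.4} to the coordinates $(\tilde{\xi}_1,\tilde{\xi}_2)$ (this replaces your passage to spherical coordinates), and at the corners $P_{3},P_{4}$ the boundary conditions determine $D\phi$ explicitly as in \eqref{eq: D for P3P4}, bounded by \Cref{lemma: inf-estimate}. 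Hence the maximum of $|D\phi|$ is attained on $\overline{\Gamma_{cone}^{\infty}}$, where it is bounded by the barrier Lipschitz constant, uniformly in $\mu$ and $\varepsilon$. If you want to salvage your plan, the fix is essentially to replace Stage 1 by this boundary-trace observation and to recognize that your Stage 2 Bernstein computation is exactly the content of the cited gradient maximum principle, applied on all of $\Omega$ rather than on $\Omega_{*}$.
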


\begin{proof}
    On the boundary $\Gamma_{cone}^{\infty}$, we infer from \eqref{eq1:3.17}--\eqref{eq1:3.18} that
	\begin{equation}\label{eq:3.34}
		\|D{\phi}\|_{L^{\infty}(\overline{\Gamma_{cone}^{\infty}})}\leq \|D{\phi}^{+}\|_{L^{\infty}(\overline{\Gamma_{cone}^{\infty}})}.
	\end{equation}     
    Besides, by applying the second to fourth boundary conditions in \eqref{eq:nianxing BC}, we have
        \begin{align}\label{eq: D for P3P4}
            \begin{split}
	    D\phi=&(\frac{\phi\tan\sigma_2}{\tan\sigma_2\xi_{1P_3}-1},0)\quad\text{at  $P_3(\xi_{1P_3},0)$},\\
            D\phi=&(0,\frac{\phi}{\cot\sigma_1-\xi_{2P_4}})\quad\text{at $P_4(0,\xi_{2P_4})$}.
            \end{split}
	\end{align}	 
    
    Note that the maximum of $|D\phi|^{2}$ cannot be attained at any interior point of the domain unless the function is constant, as demonstrated in \cite[Lemma 3.5]{LY22}. Also, it is clear that any point on $\Gamma_{sy1}\cup\Gamma_{sy2}$ can always be regarded as an interior point of $\Omega$. To deal with the point on $\Gamma_{py}$, we introduce a rotation transformation
      \begin{equation}\label{eq:4.4}	(\hat{x}_{1},\hat{x}_{2},\hat{x}_{3})=(x_{1}\cos\sigma_2+x_{3}\sin\sigma_2,x_{2},-x_{1}\sin\sigma_2+x_{3}\cos\sigma_2).
      \end{equation}
    The corresponding scaling is defined by
    \begin{equation}\label{eq:4.5}	(\hat{x}_1,\hat{x}_2,\hat{x}_3)\longmapsto(\hat{\xi}_{1},\hat{\xi}_{2}):=\Big(\frac{\hat{x}_1}{\hat{x}_3},\frac{\hat{x}_2}{\hat{x}_3}\Big), \quad
	(\rho,\Phi)\longmapsto(\rho,\hat{\phi}):=\Big(\rho, \frac{\Phi}{\hat{x}_3}\Big).
    \end{equation}
      
    The rotational invariance of equation \eqref{eq:3-D for Phi} ensures that the function $\hat{\phi}$ also satisfies equation \eqref{eq: for Fmu}. Under the rotation transformation \eqref{eq:4.4}, the corresponding point of $P_3$ in Figure \ref{fg P} coincides with the origin in the $(\hat{\xi}_{1},\hat{\xi}_{2})$-coordinates. Since equation \eqref{eq: for Fmu} is rotation-invariant and reflection-symmetric with respect to the coordinate axes, the corresponding points on $\Gamma_{py}$ can be treated as interior points in the $(\hat{\xi}_{1},\hat{\xi}_{2})$-coordinates. Furthermore, it follows from \eqref{eq: conical tran.} and \eqref{eq:4.5} that the estimates \eqref{eq1:3.17}--\eqref{eq1:3.18} and \eqref{eq:3.34} remain valid for $\hat{\phi}$. Therefore, we have
   	\begin{equation}\label{eq1:3.11}
		\|D\hat{\phi}\|_{L^{\infty}(\hat{\Omega})}\leq \|D\hat{\phi}\|_{L^{\infty}(\overline{\hat{\Gamma}_{cone}^{\infty}})},
	\end{equation} 
    where $\hat{\Omega}$ and $\hat{\Gamma}_{cone}^{\infty}$ denote the corresponding domain $\Omega$ and boundary $\Gamma_{cone}^{\infty}$, respectively, in the $(\hat{\xi}_{1},\hat{\xi}_{2})$-coordinates. This completes the proof.
\end{proof}

\begin{remark}\label{remark:lip on supsolution}
 Note that the establishment of the Lipschitz estimate \eqref{eq:3.42} depends only on the super-solution $\psi^+$ and is independent of the sub-solution $\psi^-$ obtained in Lemma $\ref{lemma: inf-estimate}$.
\end{remark}

\begin{remark}\label{remark:local uniform ellipticity}
Since the estimates \eqref{eq:strictly bounded} and \eqref{eq:3.42} are independent of $\mu$ and $\varepsilon$, equation \eqref{eq: 2-D eq.} is uniformly elliptic in any subdomain strictly away from the degenerate boundary $\Gamma_{cone}^{\infty}$.
\end{remark}

\subsection{The Proof of Theorem \ref{thm4}}\label{sec:3.2.5}
We are going to prove Theorem \ref{thm4} via the strategy outlined in subsection \ref{sec:3.2.2}. For a fixed $\varepsilon>0$, we show $J_{\varepsilon}=[0,1]$ through the following three steps.

\textit{Step} 1. $0\in J_{\varepsilon}$. Let us consider
\begin{equation}\label{eq:3.43}
	\mathcal{G}(0,\phi):=\Delta\phi+D^{2}\phi[\bm{\xi},\bm{\xi}]=0\quad\text{in $\Omega$}
\end{equation}
and the boundary conditions \eqref{eq:nianxing BC}. Using \eqref{eq: velocity3}, the boundary condition on $\Gamma_{py}$ can be rewritten as
	\begin{equation}\label{eq:condition on py}
		D\phi\cdot(\bm{\nu}_{py}-\tan\sigma_2\bm{\xi})+\phi\tan\sigma_2=0.
	\end{equation}
Also note that $\bm{\nu}_{py}=(1,-\cot\sigma_1\tan\sigma_2)$, then a simple calculation leads to the equation for $\Gamma_{py}$
        \begin{equation}\label{eq:equation of py}
		\xi_1-\xi_2\cot\sigma_1\tan\sigma_2+\tan\sigma_2=\bm{\xi}\cdot\bm{\nu}_{py}+\tan\sigma_2=0,
	\end{equation}
which implies
        \begin{equation}\label{eq:contrast}
		(\bm{\nu}_{py}-\tan\sigma_2\bm{\xi})\cdot \bm{\nu}_{py}=|\bm{\nu}_{py}|^2+\tan^2\sigma_2>0.
	\end{equation}
Thus, \eqref{eq:3.43} with \eqref{eq:nianxing BC} is an oblique derivative problem for a linear elliptic equation. 

Notice that $\bm{\nu}_{py}$ denotes the exterior normal to $\Gamma_{py}$ and $\tan\sigma_2>0$ due to $\sigma_2\in(0,\sigma_\infty)\subset(0,\frac{\pi}{2})$. Moreover, the angles at $P_3,P_4$ belong to $(\frac{\pi}{2},\pi)$ (see Figure \ref{fg P}). Then, the result in \cite[Theorem 1]{Lieb88} can be applied here; namely, there is a unique solution $\phi_{0,\varepsilon}\in C^{1}(\overline{\Omega})\cap C^{2}(\Omega)$. Note that  $\Gamma_{cone}^{\infty}$ is of $C^\infty$, and the points on $\Gamma_{py}\cup\Gamma_{sy1}\cup\Gamma_{sy2}$ can be regarded as interior points. By \cite[Lemma 6.18]{GT01}, the regularity of $\phi_{0,\varepsilon}$ can be improved to $C^{1}(\overline{\Omega})\cap C^{2}\big(\overline{\Omega}\setminus(\{P_3\}\cup\{P_4\})\big)$. In addition, by the maximum principle and Hopf lemma, we get $\phi_{0,\varepsilon}\geq \sqrt{1+|\bm{\xi}|^2}+\varepsilon$ in the $\overline{\Omega}\setminus\overline{\Gamma_{cone}^{\infty}}$. Hence, $0\in J_{\varepsilon}$.

\textit{Step} 2. $J_{\varepsilon}$ is closed. Consider a sequence $(\mu_{m},\phi_{m,\varepsilon})_{m\in\mathbb{N}}$ where $\mu_{m} \in J_{\varepsilon}$ and $\phi_{m,\varepsilon}$ denotes the solution of the corresponding problem. Assume that $\mu_{m} \to \mu_{\infty}$. We will apply the standard regularity theory to prove the closeness of $J_{\varepsilon}$. 

To improve the regularity of $\phi_{m,\varepsilon}$, we utilize the linear property of \eqref{eq: for Fmu}. Define 
\begin{multline}\label{eq: for phi form}
  \sum^{2}_{i,j=1} a_{ij}(\mu; \bm{\xi}, \phi, D\phi) \partial_{ij}\phi:
=c^{2}(\Delta \phi+D^{2} \phi[\bm{\xi},\bm{\xi}])-\mu D^{2}\phi[D\phi-\chi\bm{\xi},D\phi-\chi\bm{\xi}],
\end{multline}
where $\partial_{ij}\phi:=\partial_{\xi_i\xi_j}\phi$ with $i,j\in\{1,2\}$. Since the functions $\phi_{m,\varepsilon}$ satisfy estimates \eqref{eq1:3.17} and \eqref{eq:3.42}, the linearized equation
\begin{equation}\label{eq: for phi linear}
\sum^{2}_{i,j=1} a_{ij}(\mu_{m}; \bm{\xi}, \phi_{m,\varepsilon}, D\phi_{m,\varepsilon}) \partial_{ij}\phi=0
\end{equation}
is uniformly elliptic. Then, for any compact subdomain $\Omega^{c}_{sub}\subset \Omega$, the uniform boundedness of $\phi_{m,\varepsilon}$ in $C^\infty(\Omega^{c}_{sub})$, with respect to the parameter $\mu\in[0,1]$, follows from the interior estimates in \cite[Theorem 6.17]{GT01}. Also as mentioned in Lemma \ref{lemma: lip-estimate}, the points on $\Gamma_{py}\cup\Gamma_{sy1}\cup\Gamma_{sy2}$ can be regarded as interior points of $\Omega$. Besides, the formed corners at $P_1,~P_2$ can be eliminated by using reflection. Thus, from \cite[Corollary 6.7 and Lemma 6.18]{GT01}, we obtain $\phi_{m,\varepsilon}\in C^{2,\alpha_1}$ around the Dirichlet boundary $\overline{\Gamma_{cone}^{\infty}}$ with a uniform estimate independent of $\mu$, where $\alpha_1\in (0,1)$. 

For the corner points $P_3, P_4$, we apply \cite[Lemma 1.3]{Lieb88} to their respective neighborhoods to improve the regularity of $\phi_{m,\varepsilon}$. This yields $|\phi_{m,\varepsilon}|^{-1-\alpha_2}_2\leq C$, where the constant $\alpha_2=\alpha_2(\sigma_1,\sigma_2)\in(0,1)$, and the constant $C$ is independent of $\mu$. We refer the reader to \cite{GH80,Lieb88} for the norm $|\cdot|^{-1-\alpha_2}_{2}$. From  $\|\phi_{m,\varepsilon}\|_{c^{1,\alpha_2}}\leq|\phi_{m,\varepsilon}|^{-1-\alpha_2}_2$, it follows that $\phi_{m,\varepsilon}\in C^{1,\alpha_2}$ at the points $P_3,P_4$.

We verify here that \cite[Lemma 1.3]{Lieb88} is applicable to the problem \eqref{eq: for phi linear} and \eqref{eq:nianxing BC}.  When $\mu_{m}\in J_{\varepsilon}$, the coefficients of equation \eqref{eq: for phi linear} belong to $C^{0}(\overline{\Omega})\cap C^1\big(\overline{\Omega}\setminus(\{P_3\}\cup\{P_4\})\big)$. Moreover, the absence of lower-order terms in this equation implies that the conditions (1.4a)-(1.5d) in \cite{Lieb88} are automatically satisfied. The uniform ellipticity of equation \eqref{eq: for phi linear}, combined with the angles at $P_3$ and $P_4$ lying in $(\frac{\pi}{2},\pi)$, guarantees that the remaining conditions (1.6a)-(1.7) in \cite{Lieb88} are likewise satisfied.

In conclusion, $\phi_{m,\varepsilon}$ is uniformly bounded in $C^{1,\alpha_2}(\overline{\Omega})$ with respect to $\mu\in[0,1]$. Then, we can seek a subsequence $\mu_{m_{k}}\subset\mu_{m}$ such that $\phi_{m_{k},\varepsilon}\to\phi_{m_{\infty},\varepsilon}$ in $C^{1}(\overline{\Omega})$. Moreover, for any subdomain $\Omega_{sub}\subset\overline{\Omega}\setminus(\{P_3\}\cup\{P_4\})$, using the Arzela-Ascoli theorem, there must exist a subsequence $\mu_{m_{k'}}\subset\mu_{m_k}$ such that the corresponding functions $\phi_{m_{k'},\varepsilon}$ converge in $C^2(\Omega_{sub})$, which implies $\phi_{m_\infty,\varepsilon}\in C^1(\overline{\Omega})\cap C^2\big(\overline{\Omega}\setminus(\{P_3\}\cup\{P_4\})\big)$. Then, the limit function $\phi_{m_{\infty},\varepsilon}$ is a solution to problem \eqref{eq: for Fmu}--\eqref{eq:nianxing BC} and $\phi_{m_{\infty},\varepsilon}\geq \sqrt{1+|\bm{\xi}|^2}+\varepsilon$ in $\overline{\Omega}\setminus\overline{\Gamma_{cone}^{\infty}}$; that is, $\mu_{\infty}\in J_{\varepsilon}$.

\textit{Step} 3. $J_{\varepsilon}$ is open. Let $\mu_{0}\in J_{\varepsilon}$ and $\phi_{\mu_{0},\varepsilon}$ be the solution of the corresponding problem. We introduce a auxiliary function $s_{\mu,\varepsilon}$, given by 
	\begin{equation}\label{eq: for tran. z}
		{\phi}_{\mu,\varepsilon}(\bm{\xi})=\sqrt{1+|\bm{\xi}|^{2}}\cosh s_{\mu,\varepsilon}(\bm{\xi}).
	\end{equation}
From the above relation and \eqref{eq: for Fmu}--\eqref{eq:nianxing BC}, a tedious but direct computation yields that $s_{\mu,\varepsilon}$ satisfies
\begin{multline}\label{eq: for auxiliary z}
	(1+n(\bm{\xi},Ds))(\Delta s+D^{2}s[\bm{\xi},\bm{\xi}])-\mu(1+|\bm{\xi}|^{2})D^{2}s[Ds+(Ds\cdot\bm{\xi})\bm{\xi},Ds+(Ds\cdot\bm{\xi})\bm{\xi}]\\
	+2\Big(1+(1-\mu)n(\bm{\xi},Ds)\Big)Ds\cdot\bm{\xi}+\Big(2+(1-\mu)n(\bm{\xi},Ds)\Big)\frac{1+n(\bm{\xi},Ds)}{(1+|\bm{\xi}|^{2})\tanh s}=0.
\end{multline}
with
\begin{equation*}\label{eq:b c for auxiliary z}
    \begin{cases}
      s=\text{arcosh}\, (1+\varepsilon) \quad&\text{on $\Gamma_{cone}^{\infty}$},\\ 
      Ds\cdot(\bm{\nu}_{py}-\tan\sigma_2\bm{\xi})=0\quad&\text{on $\Gamma_{py}$},\\
      Ds\cdot\bm{\nu}_{sy1}=0\quad&\text{on $\Gamma_{sy1}$},\\
      Ds\cdot\bm{\nu}_{sy2}=0\quad&\text{on $\Gamma_{sy2}$},
    \end{cases}
\end{equation*}
where
\begin{equation*}
	n(\bm{\xi},Ds):=(1+|\bm{\xi}|^2)(|Ds|^2+|Ds\cdot\bm{\xi}|^2).
\end{equation*}
Note that, in contrast to \eqref{eq: for Fmu}, the principal coefficients of \eqref{eq: for auxiliary z} depend only on $\bm{\xi},Ds$. Moreover, the lower-order terms of this equation is non-increasing with respect to $s$. These structural properties are crucial for the solvability of the corresponding linearized equation \eqref{eq:3.46} below.
    
Equation \eqref{eq: for auxiliary z} can be rearranged into the following form:
\begin{equation}\label{eq1:3.1}
 \sum^{2}_{i,j=1} A_{ij}(\mu;\bm{\xi},Ds)\partial_{ij}s+L(\mu;\bm{\xi},s,Ds)=0,
\end{equation}
Corresponding, the linearization of equation \eqref{eq1:3.1} at $s_{\mu_{0},\varepsilon}$ is
\begin{equation}\label{eq:3.46}
	 \sum^{2}_{i,j=1} A_{ij}(\mu_0;\bm{\xi},Ds_{\mu_{0},\varepsilon})\partial_{ij}s+B_i(\mu_0;\bm{\xi},Ds_{\mu_{0},\varepsilon})\partial_i s+\partial_{s}L(\mu_{0};\bm{\xi},s_{\mu_{0},\varepsilon},Ds_{\mu_{0},\varepsilon})s=f,
\end{equation}
where $s_{\mu_{0},\varepsilon}$ is defined by $\phi_{\mu_{0},\varepsilon}$ and \eqref{eq: for tran. z}, and
\begin{align*}
	B_{i}=\sum^{2}_{i,j=1}\partial_{p_i}A_{ij}(\mu_{0};\bm{\xi},Ds_{\mu_{0},\varepsilon})\partial_{ij}s_{\mu_{0},\varepsilon}+\partial_{p_i}L(\mu_{0};\bm{\xi},s_{\mu_{0},\varepsilon},Ds_{\mu_{0},\varepsilon})\quad\text{$i=1,2$}.
\end{align*}
Here $(p_1,p_2):=(\partial_{\xi_1}s,\partial_{\xi_2}s)$. Due to $\partial_{s}L(\mu_{0};\bm{\xi},s_{\mu_{0},\varepsilon},Ds_{\mu_{0},\varepsilon})\leq0$, equation \eqref{eq:3.46} satisfies the maximum principle. Then, it follows from \cite[Appendix]{CQ19} that the corresponding oblique derivative problem is uniquely solvable. Similar to the argument in \cite{CQ19}, the linearized operator for the nonlinear mapping $\mu\mapsto s_{\mu,\varepsilon}$ is invertible at $(\mu_{0},s_{\mu_{0},\varepsilon})$. By the implicit function theorem, there consequently exists a neighborhood of $\mu_{0}$ where the mapping $\mu\mapsto s_{\mu,\varepsilon}$ is well-defined; namely, $\mu_{0}$ is an interior point of $J_{\varepsilon}$.

In conclusion, the set $J_{\varepsilon}$ is both open and closed with $0\in J_{\varepsilon}$, i.e., $J_{\varepsilon}=[0,1]$, which means the existence of $\phi_{1,\varepsilon}$.

We now establish the existence of solutions to problem \eqref{eq: problem for phi} (i.e., Problem \ref{prob2}). 
From the estimates \eqref{eq1:3.17} and \eqref{eq:3.42}, we see that $\phi_{1,\varepsilon}$ is uniformly bounded in $Lip(\overline{\Omega})$ with respect to the parameter $\varepsilon$. Hence, $\phi_{1,\varepsilon}$ is convergent in the function space $C^0(\overline{\Omega})$, with denoting $\phi$ the limit. Similar to the argument in the \textit{Step} 2, it follows that $\phi\in C^{0}(\overline{\Omega})\cap C^{1}(\overline{\Omega}\setminus\overline{\Gamma_{cone}^{\infty}})\cap C^2\big(\overline{\Omega}\setminus(\overline{\Gamma_{cone}^{\infty}}\cup\{P_3\}\cup\{P_4\})\big)$, which is a solution to problem \eqref{eq: problem for phi}. 

Furthermore, estimates \eqref{eq:strictly bounded} and \eqref{eq:3.42} imply the locally uniform ellipticity of equation \eqref{eq: 2-D eq.}. Then, from \eqref{eq: conical tran.}, \eqref{eq: spherical tran.} and Remark \ref{remark:relax cd},  Lemmas \ref{lemma: inf-estimate} and \ref{lemma: lip-estimate} can be applied to problem \eqref{eq: problem for phi}, yielding $\phi\in Lip(\overline{\Omega})\cap C^{1}(\overline{\Omega}\setminus\overline{\Gamma_{cone}^{\infty}})\cap C^2\big(\overline{\Omega}\setminus(\overline{\Gamma_{cone}^{\infty}}\cup\{P_3\}\cup\{P_4\})\big)$. Using \cite[Theorem 6.17]{GT01} and \cite[Lemma 1.3]{Lieb88}, we further obtain $\phi\in Lip(\overline{\Omega})\cap C^{1,\alpha}(\overline{\Omega}\setminus\overline{\Gamma_{cone}^{\infty}})\cap C^\infty\big(\overline{\Omega}\setminus(\overline{\Gamma_{cone}^{\infty}}\cup\{P_3\}\cup\{P_4\})\big)$.

The proof of our main theorem is complete.

\appendix

\section{The ellipticity of equation \texorpdfstring{\eqref{eq: 2-D eq.}}{(2-D eq.)}}\label{sec:appendix a}
 As discussed in subsection \ref{sec:2.2}, for the Chaplygin gas, equation \eqref{eq: 2-D eq.} is hyperbolic outside the domain $\Omega$, and becomes parabolic degenerate on the boundary $\Gamma_{cone}^{\infty}$. This naturally leads to the question of whether parabolic bubbles exist inside $\Omega$. The question can be answered by the following lemma. Thanks to the rotational invariance of equation \mbox{\eqref{eq:3-D for Phi}}, we can assume that the domain $\Omega$ does not contain the origin $(0,0)$ in conical coordinates.
 \begin{lemma}\label{DeYu-1}
	Let $\Omega_D\subset \mathbb{R}^{2}$ be an open bounded domain that does not contain the origin $(0,0)$. Suppose that a function $\phi\in C^2(\Omega_D)$ satisfies \eqref{eq: 2-D eq.} with $0\leq L\leq1$ in $\Omega_D$. Then,
	\begin{equation*}
	    L^2<1\quad\text{in}~\Omega_D.
	\end{equation*}
\end{lemma}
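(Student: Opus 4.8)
The plan is to deduce the strict inequality $L^2<1$ by comparing $\phi$ with the explicit degenerate (``sonic'') solution $\phi_0(\bm{\xi}):=\sqrt{1+|\bm{\xi}|^2}$, which is exactly the Dirichlet datum \eqref{eq:3.1} on $\Gamma_{cone}^{\infty}$. First I would record that $\phi_0$ is itself an exact solution of \eqref{eq: 2-D eq.}: a short computation gives $D\phi_0=\bm{\xi}/\sqrt{1+|\bm{\xi}|^2}$, hence $\phi_0-D\phi_0\cdot\bm{\xi}=1/\sqrt{1+|\bm{\xi}|^2}$, so that $|D\phi_0|^2+|\phi_0-D\phi_0\cdot\bm{\xi}|^2=1$; by \eqref{eq:2.34} this means precisely that the coefficient $c^2$ and the vector $D\phi-\chi\bm{\xi}$ appearing in \eqref{eq: 2-D eq.} both vanish when $\phi$ is replaced by $\phi_0$, so both terms of \eqref{eq: 2-D eq.} are zero. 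Next I would observe that, since the numerator $|D\phi|^2+|\chi|^2-\phi^2/(1+|\bm{\xi}|^2)$ of \eqref{eq: 2D Mach num.} is the squared length of the tangential part of the velocity and hence nonnegative, the hypothesis $0\le L\le1$ forces $c^2>0$ and $\phi\ge\phi_0$ in $\Omega_D$ (unless the tangential velocity vanishes identically, in which case $L\equiv0$ and there is nothing to prove). Thus the lemma reduces to showing that $u:=\phi-\phi_0\ge0$ is \emph{strictly} positive in $\Omega_D$.

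The key step then exploits an algebraic identity special to the Chaplygin gas. Writing $\phi=\phi_0+u$ in \eqref{eq:2.34} and using that the $\phi_0$--contributions to $c^2$ and to $D\phi-\chi\bm{\xi}$ vanish, one finds
\[
  c^2=\frac{2u}{\sqrt{1+|\bm{\xi}|^2}}+|Du|^2+(u-Du\cdot\bm{\xi})^2\qquad\text{in }\Omega_D,
\]
the cross terms collapsing because $D\phi_0\cdot Du+(\phi_0-D\phi_0\cdot\bm{\xi})(u-Du\cdot\bm{\xi})=u/\sqrt{1+|\bm{\xi}|^2}$. Suppose now, for contradiction, that $u(\bm{\xi}_0)=0$ for some $\bm{\xi}_0\in\Omega_D$. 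Since $u\ge0$ on the open set $\Omega_D$ and $u\in C^1$, the point $\bm{\xi}_0$ is an interior minimum of $u$, so $Du(\bm{\xi}_0)=0$; substituting into the identity above yields $c^2(\bm{\xi}_0)=0$, contradicting $c^2>0$ in $\Omega_D$ (equivalently, at such a point the ratio \eqref{eq: 2D Mach num.} would be $0/0$, i.e.\ $\rho(\bm{\xi}_0)=+\infty$). Hence $u>0$ throughout $\Omega_D$, and then $1-L^2=\bigl(\phi^2/(1+|\bm{\xi}|^2)-1\bigr)/c^2>0$, that is, $L^2<1$ in $\Omega_D$, as claimed.

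An alternative, more PDE--flavoured route is to subtract the equation satisfied by $\phi_0$ from that satisfied by $\phi$ and, via the usual mean--value linearization of the coefficients, obtain a homogeneous linear second--order equation $\mathcal{L}u=0$ whose principal part is the coefficient matrix of \eqref{eq: 2-D eq.} evaluated along $\phi$ and whose lower--order coefficients are bounded; this matrix is positive definite precisely where $u>0$, so one would like to forbid an interior zero of $u\ge0$ by the strong maximum principle and the Hopf lemma. I expect the genuine obstacle in that route to be exactly that the ellipticity of $\mathcal{L}$ degenerates to the zero matrix on $\{u=0\}$ (again because $c^2$ and $D\phi-\chi\bm{\xi}$ both vanish there), so that Hopf's lemma is not directly applicable near the touching set; the identity for $c^2$ above is precisely what bypasses this difficulty, and it is the Chaplygin pressure law \eqref{eq:for h} that makes $\phi_0$ an exact solution in the first place. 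Everything else is routine bookkeeping.
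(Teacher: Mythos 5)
Your argument is correct in substance but takes a genuinely different route from the paper's proof. The paper argues by contradiction at a point $\hat P$ where $L^2=1$: since $0\le L^2\le 1$, such a point is an interior maximum of $L^2$, so $D(L^2)(\hat P)=\bm{0}$; after invoking rotational invariance to normalize the velocity (so that $D\phi(\hat P)=\bm{0}$) and using $\phi^2=1+|\bm{\xi}|^2$ at $\hat P$, an explicit computation gives $D(L^2)(\hat P)=\frac{2\bm{\xi}}{(1+|\bm{\xi}|^2)c^2}\neq\bm{0}$, a contradiction. You instead recast $L^2\le 1$ (when $c^2>0$) as the comparison $\phi\ge\phi_0:=\sqrt{1+|\bm{\xi}|^2}$ and exclude a touching point via the (correct) identity $c^2=\frac{2u}{\sqrt{1+|\bm{\xi}|^2}}+|Du|^2+(u-Du\cdot\bm{\xi})^2$ with $u:=\phi-\phi_0$, so that $u(\bm{\xi}_0)=0$ at an interior minimum forces $c^2(\bm{\xi}_0)=0$, contradicting the very definition of $L$ in \eqref{eq: 2D Mach num.}; then $1-L^2=(\phi^2/(1+|\bm{\xi}|^2)-1)/c^2>0$. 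What your route buys is that it avoids both the rotation normalization and the differentiation of $L^2$, and it makes transparent that the conclusion is a pointwise $C^1$ fact resting only on the Chaplygin relation \eqref{eq:2.34}, the equation \eqref{eq: 2-D eq.} never being used beyond the definitions (the paper's proof likewise uses it only through rotational invariance); both proofs, however, share the same engine, namely a first-order condition at an interior extremum combined with $\phi^2=1+|\bm{\xi}|^2$ at the critical point. Two loose ends should be tightened: (i) your dichotomy ``$c^2>0$ and $\phi\ge\phi_0$, unless the tangential speed vanishes identically'' is not exhaustive as stated; the clean fix is either to note that $L$ being defined forces $c^2\neq 0$, hence $c^2$ has one sign on the connected domain, the case $c^2<0$ giving $L\equiv 0$ by nonnegativity of the numerator of \eqref{eq: 2D Mach num.}, or, better, to localize: at a putative point with $L^2=1$ the numerator equals $c^2$, so $c^2>0$ there and in a neighborhood, where your touching argument runs verbatim; (ii) you tacitly assume $\phi>0$, which is harmless since $|\phi|\ge\phi_0\ge 1$ and both \eqref{eq: 2-D eq.} and $L$ are invariant under $\phi\mapsto-\phi$, but deserves a sentence. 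With these minor patches your proof is complete, and your closing remark that a linearization/Hopf-lemma route would degenerate on the touching set is accurate.
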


\begin{proof}
     If the conclusion would not hold, then there exists a point $\hat{P}(\xi_{1p},\xi_{2p})\in \Omega_D$, such that
     \begin{equation}\label{eq:L2=1}
         L^2=1\quad\text{at}~\hat{P}.
     \end{equation}
     Combing with the assumption $0\leq L\leq1$ in $\Omega_D$, we have
     \begin{equation}\label{1fircond}
         D(L^2)(\hat{P})=\bm{0}.
     \end{equation}
     Since equation \eqref{eq:3-D for Phi} is rotation-invariant and the solution $\Phi$ of \eqref{eq:3-D for Phi} takes the form \eqref{eq: conical tran.}, we can assume without loss of generality that $\partial_{x_1}\Phi=\partial_{x_2}\Phi=0$ and $\partial_{x_3}\Phi=|D_{\bm{x}}\Phi|$ at the points $(\xi_{1p}x_3,\xi_{2p}x_3,x_3)$ with $x_3\in \mathbb{R}$. Then it follows from \eqref{eq: conical tran.}  and \eqref{eq: velocity3}--\eqref{eq: 2D Mach num.} that
     \begin{equation}\label{velocm1}
        D\phi=\bm{0} \quad\text{at}~\hat{P}.
    \end{equation}
    Also using \eqref{eq: 2D Mach num.} and \eqref{eq:L2=1}, we obtain 
    \begin{equation}\label{velocm2}
        \phi^2=1+|\bm{\xi}|^{2} \quad\text{at}~\hat{P}.
    \end{equation}
    Moreover, because equation \eqref{eq: 2-D eq.} is also rotation-invariant, without loss of generality, we may choose $\xi_{1p}=-|\bm{\xi}|<0$ and $\xi_{2p}=0$. 
    
    We will arrive at a contradiction for \mbox{\eqref{1fircond}}. From \mbox{\eqref{eq: 2D Mach num.}}, one has
     \begin{equation}\label{eq: DL2}
        \begin{split}
           D(L^2)&=\frac{D(|D\phi|^2+\chi^2)-L^2Dc^2}{c^2}-\frac{(1+|\bm{\xi}|^2)D\phi^2-\phi^2D(|\bm{\xi}|^2)}{(1+|\bm{\xi}|^2)^2c^2}\\
            &=\frac{Dc^2}{c^2}(1-L^2)-\frac{(1+|\bm{\xi}|^2)D\phi^2-\phi^2D(|\bm{\xi}|^2)}{(1+|\bm{\xi}|^2)^2c^2}.
        \end{split}
    \end{equation}
    Here, for the second identity, we have used the relation \mbox{$Dc^2=D(|D\phi|^2+\chi^2)$}. Substituting the relations \mbox{\eqref{eq:L2=1} and \eqref{velocm2}} into \mbox{\eqref{eq: DL2}} gives
    \begin{equation*}
        \begin{split}
            D(L^2)&=-\frac{2(1+|\bm{\xi}|^2)\phi D\phi-2\phi^2\bm{\xi}}{(1+|\bm{\xi}|^2)^2c^2}\quad\text{at}~\hat{P},\\
            &=\frac{2\bm{\xi}}{(1+|\bm{\xi}|^2)c^2}=-\frac{2}{(1+|\xi_{1p}|^2)c^2}(|\xi_{1p}|,0)\quad\text{at}~\hat{P}.
        \end{split}
    \end{equation*}
    Namely, $D_{\xi_1}(L^2)<0$ at $\hat{P}$, which is contrary to \mbox{\eqref{1fircond}}. The proof is complete.
\end{proof}    

\section*{Declarations} 

\textbf{Funding:} This work was supported in part by Natural Science Foundation of Hubei Province of China (Grant number 2024AFB007).

\vspace{2mm}

\textbf{Data Availability Statement:} Data sharing is not applicable to this article because no datasets were generated/analyzed during the preparation of the paper.

\vspace{2mm}

\textbf{Conflict of interest:} The author has no relevant financial or non-financial interests to disclose.

\end{document}